\documentclass[reqno]{amsart} 

\usepackage{color}
\usepackage{xcolor}

\usepackage{attrib}

\usepackage{ifpdf}
\ifpdf 
\usepackage{graphicx,import} 
\pdfcompresslevel=9 
\usepackage[pdftex,     
plainpages=false,   
breaklinks=true,    
colorlinks=true,
linkcolor=blue,
citecolor=green,
pdftitle={A general Frobenius' Theorem via the Transport of Currents},
pdfauthor={Paolo Bonicatto}
]{hyperref} 
\else 
\usepackage{graphicx}   
\usepackage{hyperref}
\fi 

\usepackage{amsfonts,amsmath}	
\usepackage{amssymb}
\usepackage{verbatim}
\usepackage{amsopn}
\usepackage[english]{babel}
\usepackage{amsthm}
\usepackage{enumerate}
\usepackage{mathrsfs}	
\usepackage{mathtools}
\usepackage{harpoon}
\usepackage{esint}
\usepackage{todonotes}
\usepackage{cancel}

\usepackage{stmaryrd}
\usepackage{bm}
\usepackage{bbm}
\usepackage{caption}
\usepackage{subcaption}
\usepackage{nicefrac}
\captionsetup{format=hang,labelfont={sf,bf}}

\usepackage[a4paper,left=35mm,right=35mm,top=34mm,bottom=40mm,marginpar=25mm]{geometry}

\newcommand{\Crm}{\mathrm{C}}

\newcommand{\Lrm}{\mathrm{L}}
\newcommand{\Mrm}{\mathrm{M}}
\newcommand{\Nrm}{\mathrm{N}}

\newcommand{\Wrm}{\mathrm{W}}

\newcommand{\Lcal}{\mathcal{L}}
\newcommand{\Mcal}{\mathcal{M}}

\newcommand{\Mbf}{\mathbf{M}}

\newcommand{\altnorm}[1]{{\left\vert\kern-0.25ex\left\vert\kern-0.25ex\left\vert #1 \right\vert\kern-0.25ex\right\vert\kern-0.25ex\right\vert}}
\newcommand{\eps}{\varepsilon}

\newcommand{\dpr}[1]{\langle #1 \rangle}

\newcommand{\dprb}[1]{\bigl\langle #1 \bigr\rangle}

\renewcommand{\rho}{\varrho}

\theoremstyle{definition} \newtheorem{definition}{Definition}[section]
\theoremstyle{definition} \newtheorem{remark}[definition]{Remark}
\theoremstyle{plain} \newtheorem{lemma}[definition]{Lemma}
\theoremstyle{plain} \newtheorem{proposition}[definition]{Proposition}
\theoremstyle{plain} \newtheorem{theorem}[definition]{Theorem}
\theoremstyle{plain} \newtheorem{corollary}[definition]{Corollary}
\theoremstyle{definition} 
\theoremstyle{plain} 
\theoremstyle{definition} 
\theoremstyle{plain} 
\theoremstyle{plain}

\DeclareMathOperator{\AC}{AC}
\DeclareMathOperator{\BV}{BV}

\DeclareMathOperator{\dive}{div}

\DeclareMathOperator{\Lip}{Lip}

\DeclareMathOperator{\curl}{curl}
\DeclareMathOperator{\Wedge}{{\textstyle\bigwedge}}

\DeclareMathOperator{\spn}{span}

\newcommand{\ee}{\mathrm{e}}

\newcommand{\sbullet}{\begin{picture}(1,1)(-0.5,-2.5)\circle*{2}\end{picture}}
\newcommand{\frarg}{\,\sbullet\,}

\newcommand{\R}{\mathbb{R}}

\newcommand{\N}{\mathbb{N}}

\renewcommand{\L}{\mathscr L}

\newcommand{\dd}{\mathrm{d}}

%\addtocounter{counter}{1}}

\newcommand{\T}{T}

\newcommand{\weaksto}{\overset{*}{\rightharpoonup}}

\newcommand{\Dscr}{\mathscr{D}}

%%%% MEASURES %%%%%
\renewcommand{\L}{\mathscr L}

\theoremstyle{plain} \newtheorem*{theorem*}{Theorem}
\theoremstyle{plain} 
\theoremstyle{plain} \newtheorem*{mthm*}{Main Theorem}
\theoremstyle{plain} \newtheorem*{conjecture*}{Conjecture}
\theoremstyle{plain} 
\theoremstyle{plain} \newtheorem*{problem*}{Problem}

\newcommand{\bbb}[1]{\llbracket #1 \rrbracket}

\numberwithin{equation}{section}

\usepackage{color}
\usepackage{graphicx}
\usepackage{tikz}
\usepackage{framed}
\definecolor{shadecolor}{rgb}{0.94, 0.97, 1.0}

% Interior product and restriction
\usepackage{pict2e}
\makeatletter
\DeclareRobustCommand{\intprod}{%
\mathbin{\mathpalette\int@prod{(0.1,0)(0.9,0)(0.9,0.8)}}}
\DeclareRobustCommand{\restrict}{%
\mathbin{\mathpalette\int@prod{(0.1,0.8)(0.1,0)(0.9,0)}}}	
\newcommand{\int@prod}[2]{%
\begingroup
\sbox\z@{$\m@th#1+$}%
\setlength\unitlength{\wd\z@}%
\begin{picture}(1,1)
	\roundcap
	\polyline#2
\end{picture}%
\endgroup
}
\makeatother

\title[]{A general Frobenius' Theorem \\ via the transport of currents}

\author{Paolo Bonicatto}
\address[P.\ Bonicatto]{Università di Trento, Dipartimento di Matematica,
Via Sommarive 14, 38123 Trento, Italy}
\email{paolo.bonicatto@unitn.it}

\date{\today}

\setcounter{tocdepth}{1}

\begin{document}
\begin{abstract}
	A classical result in Differential Geometry states that the flows of two smooth vector fields commute if and only if their Lie Bracket vanishes. In this work, we extend this result to a more general setting where one of the vector fields is bounded and Lipschitz, while the other may be a singular vector-valued measure, i.e. a normal 1-current. This result is achieved via the study of two distinct evolutionary PDEs describing the transport of vector quantities (the Vector Advection Equation and the Geometric Transport Equation). Furthermore, we show that a celebrated Theorem by Alfvén in Magnetohydrodynamics can be interpreted as a suitable time-dependent version of Frobenius’ Theorem. Our approach builds on recent advances concerning the Geometric Transport Equation for currents \cite{BDNR2, BDNR}.	
	
	\vskip.1truecm
	\noindent \textsc{\footnotesize Keywords}: currents, Lipschitz functions, Frobenius Theorem, MHD 
	\vskip.1truecm
	\noindent \textsc{\footnotesize 2020 Mathematics Subject Classification}: 49Q15, 35Q49.
\end{abstract}

\maketitle

\tableofcontents

\section{Introduction}
Given a vector field $b \colon \R^d \to \R^d$ (the reader could think of it as the velocity field of some fluid) and another vector field $\overline{v} \colon \R^d \to \R^d$ (the initial state of a certain vector quantity), we want to describe the \emph{transport} of $\overline{v}$ along $b$ under minimal regularity assumptions on $b$. From the mathematical point of view, this problem can be formulated in two ways:
\begin{enumerate}
	\item One may view $\overline{v} $ directly as a vector field; 
	\item Alternatively, one may think of $\overline{v}  = \overline{v} \L^d$ as a vector measure (1-current). 
\end{enumerate}
This distinction gives rise to the following two equations: the \emph{Vector Advection Equation}
\begin{equation}\label{eq:VAE_intro}\tag{VAE}
	\frac{\dd}{\dd t} v_t + [v_t,b] = 0, \qquad t \in (0,1)
\end{equation}
describes the transport of the vector field $\overline{v}$, while the \emph{Geometric Transport Equation} 
\begin{equation}\label{eq:GTE_intro}\tag{GTE}
	\frac{\dd}{\dd t} T_t + \mathcal{L}_b T_t = 0,  \qquad t \in (0,1)
\end{equation}
describes the transport of the 1-current $T_{\overline{v}}:=\overline{v} \L^d$. Here, in \eqref{eq:VAE_intro} the term  $[v_t,b]:= \nabla v_t \cdot b - \nabla b \cdot v_t$ denotes the (classical) Lie Bracket, whilst in \eqref{eq:GTE_intro} $\Lcal_b T_t$ denotes the Lie Derivative of a current $T_t$ in the direction $b$. This operator has been introduced in \cite{BDNR}, and can be defined by 
\[
\Lcal_b T_t := -\partial(b \wedge T_t) - b \wedge \partial T_t, \qquad \text{for every $t \in (0,1)$.}
\]
See below for more notation on currents. 
If all quantities were smooth, the solutions to the corresponding initial value problems starting from $\overline{v}$ would be given respectively by
\begin{equation}\label{eq:stellina_intro}
	v(t,x) = (\nabla X_t \cdot \overline{v})(X_{-t}(x)),
\end{equation}
in the case of \eqref{eq:VAE_intro}, and
$$
T_t = (X_t)_{\ast}(T_{\overline{v}}) 
= (\nabla X_t \cdot \overline{v})(X_{-t}(x)) \, \rho_t(x) \, \L^d, 
$$
for \eqref{eq:GTE_intro}. Here $X_t(x)=X(t,x)$ denotes the (classical) flow of $b$, and $\rho_t(x)=\rho(t,x)$ is the density of the Lebesgue measure along the flow, i.e. it holds $(X_t)_{\#}\L^d = \rho_t \L^d$ for every $t \in (0,1)$. More explicitly, one can show that 
\[
\rho(t,x) = \frac{1}{\det (\nabla X_t)(X_{-t}(x))}, \qquad (t,x) \in (0,1) \times \R^d.
\]

In the case of incompressible fluid flows, i.e. when $\dive b=0$, one easily verifies that the two equations coincide and indeed $\rho \equiv 1$. In the compressible case, however, the two equations do differ, the reason being that \eqref{eq:GTE_intro} encodes also the transport of the Lebesgue measure (which is compressed or dilated along the flow of $b$ by a factor governed precisely by $\dive b$). 

The first step out of the smooth framework is the study of \eqref{eq:VAE_intro} and \eqref{eq:GTE_intro} in the case of Lipschitz velocity fields, where the classical theory of ODEs ensures existence and uniqueness of the trajectories of $b$. The works \cite{BDN, BDNR2} contain well-posedness results for \eqref{eq:GTE_intro} within this setting. For \eqref{eq:VAE_intro}, one can start by giving a suitable notion of weak solution. In order to make sense of the Lie Bracket in a pointwise (at least a.e.) sense, it is natural to look for solutions $v_t \in\Wrm^{1,1}_x$. Our first result, obtained passing through the theory developed for \eqref{eq:GTE_intro}, yields that the representation formula \eqref{eq:stellina_intro} holds also in the case of Lipschitz velocity fields. 
\begin{proposition}[Representation formula for \eqref{eq:VAE_intro}] 
	If $b \in \Lip(\R^d;\R^d)$ is a bounded velocity field, and $v \in \Crm_t^0(\Wrm_x^{1,1})$ is a solution to \eqref{eq:VAE_intro} starting from $v_0 := \overline{v}$, then the representation formula \eqref{eq:stellina_intro} holds true for all $t \in (0,1)$ and $\L^d$-almost every $x \in\R^d$.
\end{proposition}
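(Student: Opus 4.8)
The plan is to transfer the question to the Geometric Transport Equation \eqref{eq:GTE_intro}, whose well-posedness for bounded Lipschitz velocity fields is known from \cite{BDN, BDNR2}, and then to divide out the Jacobian density. Given a solution $v \in \Crm^0_t(\Wrm^{1,1}_x)$ of \eqref{eq:VAE_intro} with $v_0 = \overline v$, I set
\[
T_t := \rho_t\, v_t\, \L^d , \qquad t \in (0,1),
\]
with $\rho_t$ the density of $(X_t)_\# \L^d$ as in the Introduction, and I claim that $(T_t)_{t\in(0,1)}$ is a weak solution of \eqref{eq:GTE_intro} with $T_0 = \overline v\,\L^d = T_{\overline v}$. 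The initial condition is clear because $\rho_0 \equiv 1$ and $X_0 = \Id$; continuity of $t \mapsto T_t$ comes from $v \in \Crm^0_t(\Wrm^{1,1}_x)$, from the two-sided bound $e^{-dt\Lip(b)} \le \rho_t \le e^{dt\Lip(b)}$ (Gronwall applied to $\partial_t \nabla X_t = (\nabla b)(X_t)\,\nabla X_t$), and from the time-continuity of $t \mapsto \rho_t$. Once the claim is proved, the uniqueness part of the theory of \eqref{eq:GTE_intro} identifies $T_t$ with $(X_t)_\ast T_{\overline v}$, whose explicit density was recalled in the Introduction; this gives the identity of vector measures $\rho_t\, v_t\, \L^d = (\nabla X_t \cdot \overline v)(X_{-t}(x))\,\rho_t(x)\,\L^d$, and since $\rho_t \ge e^{-dt\Lip(b)} > 0$ one may cancel it to obtain \eqref{eq:stellina_intro} for every $t \in (0,1)$ and $\L^d$-a.e.\ $x$.

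The substance of the proof is the verification that $T_t$ solves \eqref{eq:GTE_intro}. Working with the weak formulation, this means checking that $t \mapsto \dpr{T_t, \omega}$ is absolutely continuous with $\frac{\dd}{\dd t}\dpr{T_t, \omega} = \dpr{T_t, \Lcal_b\omega}$ for every smooth compactly supported $1$-form $\omega$, where $\Lcal_b\omega := \dd(b \intprod \omega) + b \intprod \dd\omega$ has bounded coefficients because $b$ is Lipschitz. Writing $\dpr{T_t,\omega} = \int \rho_t\, v_t\cdot\omega\,\dd x$ and differentiating in $t$, the $\partial_t\rho_t$ contribution is handled by the weak continuity equation $\partial_t\rho_t + \dive(\rho_t b) = 0$ tested against the compactly supported function $v_t\cdot\omega \in \Wrm^{1,1}_x$, and the $\partial_t v_t$ contribution is rewritten by means of \eqref{eq:VAE_intro}, namely $\partial_t v_t = -\nabla v_t\cdot b + \nabla b\cdot v_t$; a short integration by parts, legitimate with $v_t \in \Wrm^{1,1}_x$ and $b, \nabla b$ bounded, then makes all the $\dive b$ terms and all the terms containing derivatives of $\rho_t$ cancel, leaving exactly $\int \rho_t\, v_t\cdot\Lcal_b\omega\,\dd x$. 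At the level of densities the mechanism behind the cancellation is the pointwise identity $\Lcal_b(w\,\L^d) = \bigl([w,b] + (\dive b)\,w\bigr)\L^d$ together with the Leibniz rule $[\rho_t v_t, b] = (\nabla\rho_t\cdot b)\,v_t + \rho_t[v_t,b]$ and $\partial_t\rho_t = -\dive(\rho_t b)$; the advantage of testing against smooth forms is precisely that one never has to differentiate the merely bounded function $\rho_t$.

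I expect the main obstacle to be not this computation — which is robust — but the reconciliation of the low regularity of $v$ with the functional framework of \cite{BDN, BDNR2}. Since $v_t$ is only in $\Wrm^{1,1}_x$ and $\rho_t$ is merely bounded (a distributional solution of the continuity equation, with no control on $\nabla\rho_t$), the current $T_t = \rho_t v_t\,\L^d$ need not be Sobolev — nor even normal — so one must check that it is an admissible competitor in the uniqueness statement for \eqref{eq:GTE_intro} and that the equation holds in precisely the weak sense used there. The Lipschitz regularity of $b$ is what makes this possible: the weak formulation pairs $T_t$ only with smooth forms, against which the integrations by parts above require nothing beyond $v_t \in \Wrm^{1,1}_x$; equivalently, one may mollify $v_t$ in space, write \eqref{eq:VAE_intro} for the regularisation up to a commutator error, and pass to the limit, the commutator vanishing in $L^1_{\loc}$ because $\nabla b \in L^\infty$ — an elementary special case of the DiPerna--Lions commutator lemma. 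The residual routine points, namely the time-continuity of $t \mapsto \rho_t v_t$ in $L^1_{\loc}$ and the admissibility of $T_t$ as a competitor, should follow from the $L^\infty$ bounds on $\rho_t$ and the continuous dependence of the flow on time; a direct Lagrangian argument (showing that $t \mapsto (\nabla X_t(y))^{-1} v_t(X_t(y))$ is constant for $\L^d$-a.e.\ $y$) would run into exactly the same regularity issue, which is why routing through \eqref{eq:GTE_intro} is convenient.
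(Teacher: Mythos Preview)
Your overall strategy --- pass to \eqref{eq:GTE_intro} via $T_t := \rho_t v_t\,\L^d$, invoke uniqueness, divide by $\rho_t$ --- is the same as the paper's, and your formal computation showing that $T_t$ satisfies the equation is correct. The difficulty you flag, however, is not dispatched by your answer to it, and this is the genuine gap.

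You write that $T_t$ ``need not be \ldots\ normal'' and then argue that this is harmless because the weak formulation pairs $T_t$ only with smooth forms. But that only shows the equation \emph{makes sense}; it does not give you uniqueness. The uniqueness statement you are invoking (Theorem~\ref{thm:main_JFA}, from \cite{BDNR2}) is proved \emph{in the class of normal currents}, and for Lipschitz $b$ the density $\rho_t$ is merely $\Lrm^\infty$ with no control on $\nabla\rho_t$, so $\partial T_t = -\dive(\rho_t v_t)$ has no reason to be a finite measure. Mollifying $v$ does not help: the obstruction sits in $\rho_t$, not in $v_t$. So as written you have a finite-mass solution of \eqref{eq:GTE_intro} but no theorem that forces it to equal $(X_t)_*T_{\overline v}$.

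The paper's fix is to mollify $b$ rather than $v$: set $b^\eps := b*\sigma^\eps$, so that the associated density $\rho^\eps_t$ is smooth and $w^\eps_t := \rho^\eps_t v_t$ gives genuinely normal currents. The price is that $v$ is advected by $b$, not $b^\eps$, so $T_{w^\eps_t}$ solves an \emph{inhomogeneous} GTE with source $R^\eps_t = \rho^\eps_t\,[v_t,\,b^\eps-b]\,\L^d$. For smooth drift one has a Duhamel representation (Theorem~\ref{thm:duhamel}) valid in the finite-mass class, giving
\[
T_{w^\eps_t} = (X^\eps_t)_* T_{\overline v} + \int_0^t (X^\eps_{t-s})_* R^\eps_s\,\dd s .
\]
Since $\Mbf(R^\eps_t)\to 0$ (this is where $\nabla b^\eps\to\nabla b$ a.e.\ and $\|b-b^\eps\|_\infty\to 0$ are used) and $(X^\eps_t)_* T_{\overline v}\weaksto (X_t)_* T_{\overline v}$, while also $T_{w^\eps_t}\weaksto T_{\rho_t v_t}$, one obtains $T_{\rho_t v_t}=(X_t)_* T_{\overline v}$ in the limit --- \emph{without} ever needing $T_{\rho_t v_t}$ to be normal or invoking a uniqueness theorem for it. Your direct route would work if you had uniqueness for finite-mass solutions of \eqref{eq:GTE_intro} with Lipschitz drift, but that is not what the cited results provide; the mollify-$b$ detour is precisely the device that bypasses this.
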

The fundamental idea is that, by letting the Lie derivative act on currents, one can assign a rigorous meaning to the Lie Bracket $[v,b]$, even when the full derivative of $v$ does not exist in a classical sense. Every integrable vector field (with measure divergence) naturally induces a normal 1-current and therefore the expression $\Lcal_b (T_v)$ is always well-defined. In this way, one is naturally led from the study of \eqref{eq:VAE_intro} to the one of \eqref{eq:GTE_intro}. In general, the guiding principle is that multiplying by the density $\rho$ allows one to pass from a solution to \eqref{eq:VAE_intro} to a solution to \eqref{eq:GTE_intro}. Combined with the well-posedness results available for \eqref{eq:GTE}, this constitutes a major stepping stone toward the desired conclusions. This passage however might introduce additional divergence (boundary) terms and, as a consequence, requires a delicate analysis, see Section \ref{s:vae} below.

\subsection*{Frobenius Theorem} 
The evolutionary framework we develop here around \eqref{eq:VAE_intro} yields a generalised version of Frobenius’ Theorem. Let us recall that, given two smooth vector fields $b,v$ on $\R^d$ with globally defined flows $X_t, Y_s$ respectively, one has 
\[
X_t \circ Y_s = Y_s \circ X_t \quad \text{for every $s,t \in \R$ as maps on $\R^d$} 
\iff [v,b] = 0.
\]
This is a classical result in Differential Geometry, often referred to as \emph{Frobenius' Theorem}, see, e.g., \cite[Proposition 18.5]{Lee}. In recent years, much attention has been devoted to possible generalisations of this statement beyond the smooth setting, replacing the classical notion of flow with the by-now standard notion of \emph{Regular Lagrangian Flow} (RLF). The celebrated DiPerna-Lions-Ambrosio theory ensures existence and uniqueness of RLFs for Sobolev or $\BV$ vector fields with bounded divergence, and it is therefore natural to ask whether an analogue of the Frobenius' Theorem can be formulated in these contexts.

\subsubsection*{State of the Art}
In \cite{RampazzoSussmann}, the authors established a Frobenius-type theorem when both vector fields are Lipschitz. They introduced a subtle set-valued notion of the bracket in order to cope with the lack of differentiability everywhere for Lipschitz functions. More recently, Colombo and Tione in \cite{ColomboTione} proved that if $b,v \in \Wrm^{1,p}(\R^d;\R^d)$ are Sobolev ($p \ge 1$) and bounded vector fields with bounded divergence, then the commutativity of the RLFs of $b,v$ holds if, and only if,  
\[
[b,v] = 0 \text{ and $X_t$ is weakly differentiable in direction $v$ with bounded derivative,}
\]
where $X_t$ is the RLF of $b$. 
See also \cite{RebucciZizza} for an interesting discussion related to the 2D setting, where additional structural rigidity allows one to remove the differentiability assumption of \cite{ColomboTione}. Particularly relevant to the present work is the contribution of \cite{RST}, where the authors rule out the possibility of obtaining a Frobenius-type theorem for vector fields admitting a unique a.e. flow (even when they are RLFs). In addition, they present an extension of the results of \cite{RampazzoSussmann} by addressing the mixed case in which one vector field is Lipschitz and the other is merely Sobolev.

\subsubsection*{Frobenius' Theorem via the Vector Advection Equation}
As a byproduct of our analysis of \eqref{eq:VAE_intro}, we obtain a version of Frobenius' Theorem in the case where one vector field is Lipschitz, but the transported object is a general normal 1-current. In this respect, our contribution is close in spirit to and extends \cite{RST}. In the case of vector fields, our commutativity result reads as follows.

\begin{theorem*}[Generalised Frobenius' Theorem]
	Let $b \in \Lip(\R^d;\R^d)$ be a bounded vector field and let $\rho$ denote its density. Let $v \colon \R^d \to \R^d$ be a bounded vector field, with bounded divergence, possessing a unique RLF and such that the currents $T_t := \rho_t v \L^d$ solve \eqref{eq:GTE_intro}. Then the RLFs of $b$ and $v$ commute.  
\end{theorem*}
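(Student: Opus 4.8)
The plan is to extract from the uniqueness theory for \eqref{eq:GTE_intro} a pointwise identity saying that the flow of $b$ transports $v$ onto itself, and then to promote it to the commutativity of the two RLFs via the uniqueness of the RLF of $v$. Throughout, $X_t$ denotes the flow of $b$, which is classical and bi-Lipschitz in $x$ since $b$ is bounded and Lipschitz (in particular, $X_t$ is the RLF of $b$), and $Y_s$ denotes the RLF of $v$, globally defined for $s\in\R$ since $v$ is bounded.

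\emph{Step 1 (a transport identity for $v$).} Since $b$ is bounded and Lipschitz, $\|\nabla X_{\pm t}\|_\infty\le e^{|t|\Lip(b)}$, so the density $\rho_t=1/\big((\det\nabla X_t)\circ X_{-t}\big)$ is bounded above and below by positive constants on bounded time intervals, and $\rho_0\equiv 1$; hence the curve $t\mapsto\rho_tv\,\L^d$ has initial datum $v\,\L^d$, which is a normal $1$-current as $v$ is bounded with bounded divergence. By the well-posedness theory for \eqref{eq:GTE_intro} with Lipschitz drift, the solution starting from $v\,\L^d$ is unique and equals $(X_t)_\ast(v\,\L^d)$; as by hypothesis $t\mapsto\rho_tv\,\L^d$ is also such a solution, $(X_t)_\ast(v\,\L^d)=\rho_tv\,\L^d$ for every $t$. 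Writing the pushforward of the absolutely continuous $1$-current $v\,\L^d$ under the bi-Lipschitz map $X_t$ by the change-of-variables formula, $(X_t)_\ast(v\,\L^d)=\big[(\nabla X_t\cdot v)\circ X_{-t}\big]\rho_t\,\L^d$, cancelling the strictly positive factor $\rho_t$, and changing variables $x\mapsto X_t(x)$, we arrive at
\begin{equation}\label{eq:plan_star}
  \nabla X_t(x)\cdot v(x)=v\big(X_t(x)\big)\qquad\text{for every $t$ and $\L^d$-a.e.\ $x$.}
\end{equation}

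\emph{Step 2 (commutativity from uniqueness of the RLF).} Fix $t\in\R$ and set $\widetilde Y_s:=X_t\circ Y_s\circ X_{-t}$. The aim is to show $\widetilde Y$ is again a RLF of $v$; uniqueness of the RLF of $v$ then forces $\widetilde Y_s=Y_s$ $\L^d$-a.e.\ for all $s$, i.e.\ $X_t\circ Y_s=Y_s\circ X_t$ $\L^d$-a.e., and since $t$ and $s$ are arbitrary this is precisely the commutativity of the RLFs. Now $\widetilde Y_0=\Id$, and the compressibility bound for $\widetilde Y$ follows from that for $Y$ together with the two-sided bounds on $(X_{\pm t})_\#\L^d=\rho_{\pm t}\L^d$. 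What remains is the integral-curve property: for $\L^d$-a.e.\ $z$, the curve $s\mapsto X_t(Y_s(z))$ is absolutely continuous, equals $X_t(z)$ at $s=0$, and solves $\tfrac{\dd}{\dd s}X_t(Y_s(z))=v\big(X_t(Y_s(z))\big)$.

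\emph{Step 3 (the chain rule along trajectories, and the main obstacle).} Absolute continuity of $s\mapsto X_t(Y_s(z))$ and its value at $s=0$ are immediate; the issue is the chain rule $\tfrac{\dd}{\dd s}X_t(Y_s(z))=\nabla X_t(Y_s(z))\cdot\dot Y_s(z)$, which is not automatic because $X_t$ is only Lipschitz. Here the compressibility of the RLF is decisive: letting $N$ be the $\L^d$-null set of points where $X_t$ fails to be differentiable together with those where \eqref{eq:plan_star} fails, one has $(Y_s)_\#\L^d(N)\le C\,\L^d(N)=0$ for every $s$, so by Fubini, for $\L^d$-a.e.\ $z$ one has $Y_s(z)\notin N$ for a.e.\ $s$; for such $z$ and a.e.\ such $s$, both $\dot Y_s(z)=v(Y_s(z))$ exists and $X_t$ is differentiable at $Y_s(z)$, so the elementary pointwise chain rule together with \eqref{eq:plan_star} gives $\tfrac{\dd}{\dd s}X_t(Y_s(z))=\nabla X_t(Y_s(z))\cdot v(Y_s(z))=v\big(X_t(Y_s(z))\big)$, completing Step 2. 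I expect this to be the genuine difficulty: the delicate point is upgrading \eqref{eq:plan_star}, which is only an $\L^d$-a.e.\ statement in $x$, to an identity valid along $\L^d$-a.e.\ trajectory of $v$, which works precisely because a bona fide RLF does not concentrate trajectories on Lebesgue-null sets — and, one step earlier, because the two-sided bound on $\rho_t$ legitimizes both the cancellation in Step 1 and the transfer of null sets. This is consistent with the impossibility results of \cite{RST}: absent a unique, compressible RLF, \eqref{eq:plan_star} carries no information along the flow and commutativity may genuinely fail.
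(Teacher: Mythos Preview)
Your proposal is correct and follows essentially the same route as the paper's proof: deduce the pointwise identity $(\nabla X_t\cdot v)(X_{-t}(\cdot))=v(\cdot)$ from uniqueness for \eqref{eq:GTE_intro}, then show that $X_t\circ Y_s\circ X_{-t}$ (the paper uses the equivalent conjugate $X_{-t}\circ Y_s\circ X_t$) is again a RLF of $v$ by checking the compressibility bound and the ODE along trajectories, and conclude via uniqueness of the RLF. The only minor stylistic difference is in the chain-rule step: you invoke Rademacher's full differentiability of $X_t$ together with the elementary pointwise chain rule, while the paper instead splits the difference quotient explicitly into a Lebesgue-point remainder and a directional-derivative term; both arguments rely on the same Fubini/compressibility transfer of null sets and are equivalent here since the mass measure is $\L^d$.
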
  

This statement becomes especially transparent in the case when $\dive b =0$ (i.e., $\rho \equiv 1$): every vector field $v$ with bounded divergence admitting a unique RLF and satisfying $\Lcal_b (T_v) = 0$ (if $v \in \Wrm^{1,1}$ this coincides precisely with the vanishing-bracket condition $[b,v]=0$ a.e.) is admissible, and hence its flow and the one of $b$ commute. Notably, this version of Frobenius' Theorem requires neither Sobolev nor BV assumptions on the transported field $v$, but only a control on its divergence (besides the uniqueness of its RLF).
\vspace{.1cm}

At this point, it is natural to push the current-based formulation even further. One can state a Frobenius-type result where the velocity field is Lipschitz and the transported objects are general 1-currents (i.e. possibly singular vector measures). The analogue in this context (once again, particularly clear in the incompressible case) can be stated as follows: if a normal 1-current $\overline{T}$ satisfies $\Lcal_b \overline{T}=0$, then the current is \emph{invariant} under the flow of $b$, i.e. $(X_t)_{*}\overline{T} = \overline{T}$. This invariance admits also a Lagrangian interpretation which can be seen by means of the Smirnov's Theorem \cite{smirnov}. According to this classical result, every normal 1-current can be written as a suitable superposition (without cancellations) of elementary currents associated to Lipschitz curves. We show that, given an incompressible Lipschitz field $b$ and a normal 1-current $\overline{T}$ such that $\Lcal_b \overline{T}=0$, if we push forward any Smirnov representation of $\overline{T}$ through the flow at time $t$ (we think here of the flow lifted to a map from curves to curves) we obtain another representation of the same current. In this sense, every Smirnov measure of $\overline{T}$ induces a trajectory of measures $\eta_t$, all of which are Smirnov measures of the same current, see Proposition \ref{prop:smirnov}.

\subsection*{Alfvén’s Theorem as a Time-Dependent Frobenius-Type Result} 
Our approach to Frobenius' Theorem via evolutionary-type PDEs like \eqref{eq:GTE_intro} can also serve to clarify certain results in Physics. A notable example is \emph{Alfvén’s Theorem} in Magnetohydrodynamics (MHD), which can be interpreted as a time-dependent analogue of Frobenius' Theorem.
In three dimensions, the Geometric Transport Equation \eqref{eq:GTE} can be used to describe the evolution of the magnetic field in a perfectly conducting fluid (like ideal plasma) advected by a velocity field. More precisely, the magnetic field $B = B(t,x)$ can be seen as a  divergence-free field (by Maxwell’s equations) satisfying the \emph{induction equation}
\begin{equation}\label{eq:induction_eq_intro}
	\frac{\dd}{\dd t} B_t = \curl(V\times B_t),
\end{equation}
where $V \colon \R^d \to \R^d$ represents the velocity of the fluid. As noted in \cite{BDNR}, equation \eqref{eq:induction_eq_intro} is precisely \eqref{eq:GTE_intro} in the case of absolutely continuous, boundaryless 1-currents $T_t := B_t \L^3$.
According to Alfvén's principle, in an ideal plasma with infinite conductivity, the matter of the fluid is \emph{fastened} to the magnetic field lines, or vice versa, the magnetic field lines are ‘frozen into the fluid.’
We provide a rigorous Lagrangian formulation of this statement (see Theorem~\ref{thm:alfven}) in the case of Lipschitz incompressible velocity fields $V$ and a possibly rough magnetic field $B$. Suppose that, at each time $t$, the magnetic field $B_t$ admits a unique (globally defined) RLF, denoted by $Y^{(t)}$. Then, for every $t \in (0,1)$, the uniqueness results for \eqref{eq:GTE_intro} and similar computations to those behind the generalised Frobenius' Theorem yield that 
\[
Y^{(t)} = X_t \circ Y^{(0)} \qquad \text {as maps on $\R \times \R^3$,}
\]
for every $t \in (0,1)$. In other words, a magnetic field line at time $0$ is carried by the flow of the fluid to a magnetic field line at time $t$, and this provides the mathematical interpretation of the statement that magnetic field lines are \emph{frozen} into the fluid.
In the compressible case, however, this no longer holds as stated: only the direction of the magnetic field lines is frozen, while their intensity is affected by the density factor $\rho$. Other Eulerian formulations of Alfvén’s Theorem will be investigated in future work.

\subsection*{Acknowledgements}
The author wishes to thank Andrea Marchese and Elio Marconi for helpful discussions on the topic of this paper and Martina Zizza and Giacomo Del Nin for their useful comments on a preliminary version of the manuscript.  

\section{Notation and preliminaries} \label{sc:prelim}
This section fixes our notation and recalls some basic facts. We refer the reader to~\cite{Federer69book,KrantzParks08book} for the complete proofs, as well as to \cite{BDNR2,BDNR} for further details on the Geometric Transport Equation.

Throughout the paper we will let $d \in \N$ be the ambient dimension. 
Given a vector $w \in \R^d$ we often write $(w)_i$ to denote the $i$-th component of $w$, where $i = 1,\ldots, d$.

\subsection{Currents} We refer to~\cite{Federer69book} for a comprehensive treatment of the theory of currents, summarising here only the main notions that we will need. The space of \textbf{$k$-dimensional currents} $\mathscr D_k(\R^d)$ is defined as the dual of $\Dscr^k(\R^d)$, where the latter space is the space of compactly supported smooth differential $k$-forms, endowed with the locally convex topology induced by local uniform convergence of all derivatives. Then, the notion of \textbf{(sequential weak*) convergence} is the following: 
\begin{equation*}
	T_n \weaksto T \text{ in the sense of currents } \Longleftrightarrow \dprb{ T_n, \omega} \to \dprb{ T, \omega} \quad  \text{for all $\omega \in \Dscr^k(\R^d)$.}
\end{equation*}
The \textbf{boundary} of a current is defined as the adjoint of De Rham's differential: if $T$ is a $k$-current, then $\partial T$ is the $(k-1)$-current given by 
\begin{equation*}
	\dprb{ \partial T, \omega } = \dprb{ T, d\omega },   \qquad  \omega \in \Dscr^{k-1}(\R^d). 
\end{equation*}
We denote by $\Mrm_k(\R^d)$ the space of $k$-\textbf{currents with finite mass} in $\R^d$, where the \textbf{mass} of a current $T \in \mathscr{D}_k(\R^d)$ is defined as 
\begin{equation*}
	\mathbf{M}(T):= \sup \left\{ \dprb{ T,\omega }:  \omega \in \Dscr^k(\R^d), \| \omega\|_\infty \le 1 \right\}.
\end{equation*}
Let $\mu$ be a finite measure on $\R^d$ and let ${\tau} \colon \R^d \to \Wedge_k(\R^d)$ be a map in ${\rm L}^1(\mu)$, where $\Wedge_k(\R^d)$ is the space of $k$-vector fields (for $k \in \{0, \ldots, d\}$). Then we define the current $T:={\tau}\mu$ as 
\begin{equation*}
	\dprb{ T, \omega } = \int_{\R^d} \dprb{ \tau(x),\omega(x) }\; \dd \mu(x), \qquad \forall \omega \in \Dscr^k(\R^d). 
\end{equation*}
We recall that all currents with finite mass can be represented as $T= \tau \mu$ for a suitable pair $\tau, \mu$ as above. In the case when $\|\tau\|=1$ $\mu$-a.e., we denote $\mu$ by $\|T\|$ and we call it the \textbf{mass measure} of $T$. As a consequence, we can write $T=\vec{T}\|T\|$, where $\|\vec{T}(x)\|=1$ for $\|T\|$-a.e. $x$. 

Given a current $T=\tau \mu \in \Dscr_k(\R^d)$ with finite mass and a vector field $v\colon \R^d \to \R^d$ defined $\|T\|$-a.e., we define the \textbf{wedge product}
\[
v\wedge T:=(v\wedge \tau) \mu\in \Dscr_{k+1}(\R^d).
\]

\subsubsection*{Absolutely continuous 1-currents}
In the following, given a vector field $v \colon \R^d \to \R^d$ we denote by $T_v:=v\L^d$ the canonical 1-current associated with $v$. We will adopt the same notation also when $v=v(t,x)$ is a time dependent vector field, i.e. a map $v \colon \R \times \R^d \to \R^d$.
Observe that, by a direct computation, for every smooth vector field $v$ we have 
\[
\partial T_v = -(\dive v)\L^d 
\]
in the sense of $0$-currents on $\R^d$. 

\subsubsection*{Normal currents}
A $k$-current on $\R^d$ is said to be \textbf{normal} if both $T$ and $\partial T$ have finite mass. The space of normal $k$-currents is denoted by $\mathrm N_k(\R^d)$.
The weak* topology on the space of (normal) currents has good properties of compactness and lower semicontinuity: if $(T_j)_j$ is a sequence of currents with $\Mbf(T_j) + \Mbf(\partial T_j) \le C < +\infty$ for every $j \in \N$, then there exists a normal current $T$ such that, up to a subsequence, $T_j \weaksto T$. Furthermore, 
\begin{equation*}
	\Mbf(T) \le\liminf_{j \to +\infty} \Mbf(T_j) ,  \qquad
	\Mbf(\partial T) \le\liminf_{j \to +\infty} \Mbf(\partial T_j).
\end{equation*}

\subsubsection*{Pushforward of currents}
The \textbf{pushforward} of $\T$ with respect to a proper $\Crm^1$-map $f\colon \R^d \to \R^d$ is defined by
\[
\dprb{ f_* T,\omega}=\dprb{ \T,f^* \omega}. 
\]
One can show the estimate 
\begin{equation}\label{eq:mass_of_pushforward}
	\Mbf(f_{*} T) \leq \int_{\R^d} \bigl\| Df(x)[\tau(x)] \bigr\| \; \dd \|T\|(x)
	\leq \|Df\|_\infty^k \Mbf(T).
\end{equation}
In the case of measures, we employ instead the standard notation $f_{\#} \mu$ for the pushforward of $\mu$ under a map $f$, namely, the measure defined by $f_{\#}\mu (A)=\mu(f^{-1}(A))$. Observe that $f_{*} \mu$ and $f_{\#} \mu$ coincide in this case.

\subsection{Decomposability bundle}
We recall from~\cite{AM} the definition and a few basic facts about the decomposability bundle. Given a measure $\mu$ on $\R^n$, the \textit{decomposability bundle} is a $\mu$-measurable map $x\mapsto V(\mu,x)$ (defined up to $\mu$-negligible sets) which associates to $\mu$-a.e.\ $x$ a subspace $V(\mu,x)$ of $\R^n$. We refer to~\cite{AM} for the precise notion of $\mu$-measurability of subspace-valued maps.

The map  $V$ satisfies the following property: Every Lipschitz function $f \colon \R^n\to\R$ is differentiable at $x$ along the subspace $V(\mu,x)$, for $\mu$-a.e.\ $x\in\R^n$. Moreover, this map is $\mu$-maximal in a suitable sense, meaning that $V(\mu,x)$ is, for $\mu$-a.e.\ $x$, the biggest subspace with this property (see~\cite[Theorem~1.1]{AM}). 
The directional derivative of $f$ at $x$ in direction $v \in V(\mu,x)$ will be denoted by $Df(x)[v]$. Observe that this is a slight abuse of notation, as the full differential $Df$ might not exist at $x$, even though the directional derivative exists.

A key fact about the decomposability bundle with regard to the theory of normal currents is the following~\cite[Theorem~5.10]{AM}: Given a normal $k$-current $T=\vec{T}\|T\|$ in $\R^n$, it holds that
\begin{equation}\label{eq:span_in_bundle}
	\spn(\vec{T})\subseteq V(\|T\|,x)\qquad\text{for $\|T\|$-a.e.\ $x\in\R^n$}.
\end{equation}
In particular, given any Lipschitz function $f$, we can define $D_T f$ at $\|T\|$-a.e.\ point as the restriction of the differential of $f$ to $\spn(\vec T)$. We will usually just write $Df$ instead of $D_T f$ when this differential is evaluated in a direction in $\spn(\vec T)$.

Recall that for a normal current $T\in \Nrm_k(\R^d)$ it is possible to define the pushforward $f_*T$ when $f\colon \R^d \to \R^d$ is merely Lipschitz via the homotopy formula~\cite[4.1.14]{Federer69book}. Classically, no explicit formula for this pushforward was available. However, it is shown in~\cite[Proposition~5.17]{AM} that the pushforward formula, in fact, remains true:

\begin{lemma}\label{lemma:pushforward_currents}
	Suppose that $T=\tau\mu$ is a normal $k$-current in $\R^n$, and $f \colon \R^n\to \R^m$ is a proper, injective Lipschitz map. Then, the pushforward current $f_*T$ satisfies
	\[
	f_*T= \tilde\tau\tilde\mu,
	\]
	where $\tilde\mu =f_\#\mu $, and $\tilde \tau(y)=Df(x) [\tau(x)] = D_T f(x) [\tau(x)]$ with $y=f(x)$.
\end{lemma}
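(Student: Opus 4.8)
The plan is to reduce the claim to the classical pushforward formula for smooth maps by mollification, using the decomposability bundle to control the passage to the limit. First I would recall how $f_*T$ is defined when $f$ is merely Lipschitz \cite[4.1.14]{Federer69book}: applying the homotopy formula to $\dbr{0,1}\times T$ along the affine homotopy between two Lipschitz maps $g,g'\colon\R^n\to\R^m$ yields the mass estimate
\[
\mathbf{M}\bigl(g_*T - g'_*T\bigr) \le C\bigl(\Lip(g),\Lip(g')\bigr)\,\|g-g'\|_{\infty,\spt\mu}\,\bigl(\mathbf{M}(T)+\mathbf{M}(\partial T)\bigr),
\]
so that $g\mapsto g_*T$ is continuous, on sets of maps with equibounded Lipschitz constants, with respect to local uniform convergence. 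Choosing $f_\eps := f*\phi_\eps$ with $\phi_\eps$ a standard mollifier, one has $\Lip(f_\eps)\le\Lip(f)$ and $f_\eps\to f$ locally uniformly, hence $(f_\eps)_*T \weaksto f_*T$ in $\Dscr_k(\R^m)$. (One may reduce to $T$ compactly supported by exhaustion, using that $f$ is proper.) Since the identity $f_*T=\tilde\tau\tilde\mu$ is between currents of finite mass, it suffices to test it against an arbitrary $\omega\in\Dscr^k(\R^m)$.

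Next I would write out both sides explicitly. For the smooth maps $f_\eps$ the classical formula gives
\[
\dprb{(f_\eps)_*T,\omega} = \int_{\R^n}\dprb{Df_\eps(x)[\tau(x)],\,\omega(f_\eps(x))}\,\dd\mu(x),
\]
where $Df_\eps(x)[\tau(x)]$ is the image of the $k$-vector $\tau(x)$ under the linear map induced by $Df_\eps(x)$ on $k$-vectors. On the other side, because $f$ is injective and proper, $\tilde\mu=f_\#\mu$ is a well-defined Radon measure and the change of variables $y=f(x)$ is one-to-one, so $\dprb{\tilde\tau\tilde\mu,\omega}=\int_{\R^n}\dprb{D_Tf(x)[\tau(x)],\omega(f(x))}\,\dd\mu(x)$; the integrand is meaningful because $\tau(x)\in\spn\vec{T}(x)\subseteq V(\|T\|,x)$ for $\|T\|$-a.e.\ $x$ by \eqref{eq:span_in_bundle}, and every Lipschitz map is differentiable along $V(\|T\|,x)$ at $\mu$-a.e.\ $x$. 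Comparing the two formulas, and using that $\omega\circ f_\eps\to\omega\circ f$ uniformly while $\|Df_\eps(x)[\tau(x)]\|\le\Lip(f)^k\|\tau(x)\|\in\Lrm^1(\mu)$, everything boils down to the weak-$*$ convergence
\[
Df_\eps(x)[\tau(x)] \weaksto D_Tf(x)[\tau(x)] \quad\text{in }\Lrm^\infty(\mu;\Wedge_k(\R^m)),
\]
i.e.\ when tested against $\Crm_c(\R^n;\Wedge_k(\R^m))$.

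The hard part will be precisely this convergence, and it is exactly here that the decomposability bundle is indispensable: $Df_\eps=(Df)*\phi_\eps$ converges to $Df$ only $\L^n$-a.e., whereas $\mu$ may be purely singular, so the limit has to be extracted from the privileged direction $\tau(x)$ rather than from Lebesgue differentiation. I would obtain it by reduction to dimension one: slicing $T$ by a generic $(k-1)$-parameter family of affine maps produces normal $1$-currents \cite[4.3]{Federer69book}, and it is enough to treat each slice; by Smirnov's theorem \cite{smirnov} a normal $1$-current decomposes, without cancellation, as a superposition $\int\dbr{\gamma}\,\dd\eta(\gamma)$ of oriented Lipschitz curves whose velocities satisfy $\dot\gamma(t)\in V(\|T\|,\gamma(t))$ for a.e.\ $t$ --- a compatibility that is essentially the mechanism behind the characterisation of $V$ in \cite[Theorem~1.1]{AM}. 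Along such curves the chain rule $\frac{\dd}{\dd t}(f\circ\gamma)(t)=Df(\gamma(t))[\dot\gamma(t)]$ holds and is stable under the mollification $f\mapsto f_\eps$; integrating against $\eta$ and reassembling the slices produces the desired weak-$*$ limit. Alternatively, one simply invokes \cite[Proposition~5.17]{AM} directly.

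Granted the weak-$*$ convergence, passing to the limit $\eps\to0$ in the two displayed integrals gives $\dprb{f_*T,\omega}=\dprb{\tilde\tau\tilde\mu,\omega}$ for every test form $\omega$, and since both currents have finite mass this is the asserted identity $f_*T=\tilde\tau\tilde\mu$ with $\tilde\mu=f_\#\mu$ and $\tilde\tau(y)=Df(x)[\tau(x)]=D_Tf(x)[\tau(x)]$, $y=f(x)$. In summary, the only genuinely delicate step is the decomposability-bundle-based weak-$*$ convergence; everything else is the smooth pushforward formula together with the continuity of $g\mapsto g_*T$ under uniform approximation.
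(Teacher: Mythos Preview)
The paper does not give its own proof of this lemma: immediately before the statement it writes ``it is shown in~\cite[Proposition~5.17]{AM} that the pushforward formula, in fact, remains true'' and then simply records the result. Your proposal, by contrast, sketches an actual argument (mollify $f$, use the homotopy-formula continuity of $g\mapsto g_*T$, and pass to the limit in the smooth pushforward formula via the decomposability bundle and Smirnov decomposition), and you yourself note at the end that one may ``alternatively invoke \cite[Proposition~5.17]{AM} directly'' --- which is precisely, and only, what the paper does.

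Your outline is a reasonable summary of the mechanism behind the Alberti--Marchese result; the one place where the sketch is thin is the reassembly after slicing (how exactly the weak-$*$ convergence along $1$-dimensional slices recombines into the $k$-vector statement for $Df_\eps[\tau]$), but since the paper treats the lemma as a black box from \cite{AM} there is nothing here to compare against beyond the citation.
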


\subsection{Flows of Lipschitz fields}\label{ss:flows}

Suppose that $b \colon \R^d \to \R^d$ is a globally bounded, Lipschitz vector field, and let 
$X \colon \R \times \R^d \to \R^d$ be the associated flow, i.e.~the unique map satisfying 
\begin{equation}
	\left\{
	\begin{aligned}
		\frac{\dd}{\dd t} X(t,x) &= b(X(t,x)), && (t,x) \in \R \times \R^d, \\ 
		X(0,x) &= x, && x \in \R^d. 
	\end{aligned}
	\right.
\end{equation}
The existence and uniqueness of the flow map (which is indeed defined on the whole $\R$) follow from the classical Cauchy--Lipschitz theory. As a direct consequence of uniqueness, one deduces the semigroup property
\[
X_t \circ X_s = X_{t+s}, \qquad t,s \in \R. 
\]
Moreover, for every $t \in \R$, the map $X_t$ is invertible, and it holds
\[
(X_t)^{-1} = X_{-t}.
\]
By Gronwall's inequality, for each fixed $t \in \R$, the map  
\[
X_t \colon x \mapsto X_t(x) := X(t,x)
\]
is Lipschitz. In particular, one can define the function $\rho \colon \R \times \R^d \to \R^+$ by
\[
\rho(t,x) := \frac{1}{\det\big(\nabla X_t\big)(X_{-t}(x))}. 
\]
By standard estimates and Rademacher’s Theorem, $\rho(t,\cdot)$ is well-defined and strictly positive for $\L^d$-a.e.~$x$. We remark that $\rho$ is not only strictly positive a.e. but also bounded from above and below by positive constants depending only on $t$ and the Lipschitz constant of $b$. Indeed, the Jacobian determinant of the flow can be controlled explicitly and one gets 
\[
e^{-\|\dive b\|_{\infty} t} \leq \rho(t,x) \leq e^{\|\dive b\|_\infty t}, 
\]
for every $t>0$ and $\L^d\text{-a.e. } x \in \R^d$. A short computation using the change of variables formula (or the Area Formula, see \cite[Section~3]{AmbrosioCrippa}) shows that $\rho$ is the unique solution (in the class of measure-valued solutions) to 
\[
\begin{cases} 
	\partial_t \rho + \dive (\rho b) = 0, \\ 
	\rho(0,\cdot) \equiv 1, 
\end{cases}
\]
that is, $\rho$ is precisely the density of the push-forward measure 
\[
(X_t)_{\#} \L^d = \rho(t,\cdot) \, \L^d. 
\]

\subsection{Regular Lagrangian Flows} 
Let $v \colon [0,1] \times \R^d \to \R^d$ be a bounded Borel vector field. A map 
\[
X \colon [0,1] \times \R^d \to \R^d
\]
is called a \emph{regular Lagrangian flow} (RLF) associated to $v$ if:
\begin{enumerate}
	\item For $\L^d$-a.e.~$x \in \R^d$, the map $t \mapsto X(t,x)$ is absolutely continuous and solves the ODE
	\[
	\frac{\dd}{\dd t} X(t,x) = v(t,X(t,x)), 
	\qquad X(0,x) = x.
	\]
	\item There exists a constant $C > 0$ (independent of $t$) such that 
	\[
	X(t,\cdot)_{\#} \L^d \leq C \, \L^d, \qquad \forall t \in [0,1].
	\]
\end{enumerate}
The DiPerna-Lions and Ambrosio's theory, see, e.g., \cite{AmbrosioCrippa}, ensures the existence and uniqueness of RLFs under mild assumptions such as $b \in \Lrm^1_t\Wrm^{1,1}_x$ or $\Lrm^1_t\BV_x$ with bounded divergence (and suitable growth conditions). 

\subsection{Geometric Transport Equation}

We recall that the Geometric Transport Equation has been introduced in \cite{BDNR} and reads as 
\begin{equation}\label{eq:GTE}\tag{GTE}
	\frac{\dd}{\dd t}T_t+\Lcal_b T_t =0,  \qquad t \in (0,1). 
\end{equation}
Given a Lipschitz and bounded vector field $b \colon \R^d \to \R^d$, we understand \eqref{eq:GTE} in the following sense: A time-parametrised family $(T_t)_{t \in (0,1)}$ of normal $k$-currents in $\R^d$ is called a weak solution to~\eqref{eq:GTE} if the integrability condition
\begin{equation}\label{eq:assumption_integrability_masses}
	\int_0^1 \Mbf(T_t)+\Mbf(\partial T_t) \;\dd t<\infty
\end{equation}
and
\begin{equation}\label{eq:PDE_weak_formulation}
	\int_0^1 \dprb{ T_t,\omega} \, \psi'(t) \;\dd t-\int_0^1\dprb{ \Lcal_b  T_t, \omega} \, \psi(t) \;\dd t=0
\end{equation}
hold for all $\psi\in \Crm^1_c((0,1))$ and all $\omega\in \Dscr^k(\R^d)$.
According to~\cite[Lemma 3.5(i)]{BDNR} we can assume that the map $t\mapsto T_t$ is weakly$^*$-continuous. Therefore, it makes sense to consider the initial-value problem 
\begin{equation}\label{eq:initial_value_problem}
	\left\{\begin{aligned}
		\frac{\dd}{\dd t}T_t+\Lcal_b T_t &=0,  \qquad t \in (0,1),\\
		T_0 &=\overline T,
	\end{aligned}\right.
\end{equation}
where the initial condition is understood in the sense that $\overline T$ is the weak$^*$ limit of $T_t$ as $t\to 0$.
Let us recall the main result from \cite{BDNR2}: 
\begin{theorem}\label{thm:main_JFA} 
	Let $b\colon \R^d \to \R^d$ be a globally bounded and Lipschitz vector field with flow $X_t = X(t,\frarg) \colon \R^d \to \R^d$ and let $\overline T \in \Nrm_k(\R^d)$ be a $k$-dimensional normal current on $\R^d$. Then, the initial-value problem \eqref{eq:initial_value_problem}
	admits a solution $(T_t)_{t \in (0,1)} \subset \Nrm_k(\R^d)$ of normal $k$-currents, which is unique in the class of normal $k$-currents. The solution is given by the pushforward of the initial current under the flow, namely, $T_t=(X_t)_*\overline T$.
\end{theorem}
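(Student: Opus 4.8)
The plan is to establish the two claims separately --- that $T_t:=(X_t)_\ast\overline T$ is a normal solution of \eqref{eq:initial_value_problem}, and that it is the only one --- by transporting the classical Cartan calculus through the flow $X$, after regularising $b$. The algebraic backbone is the duality identity
\[
\dprb{\Lcal_b S,\omega}=-\dprb{S,\Lbb_b\omega},\qquad \Lbb_b\omega:=d(\iota_b\omega)+\iota_b(d\omega),
\]
valid for every normal $k$-current $S$, every bounded Lipschitz $b$, and every $\omega\in\Dscr^k(\R^d)$; it follows at once from $\Lcal_b S=-\partial(b\wedge S)-b\wedge\partial S$, from the adjointness $\dprb{b\wedge S,\eta}=\dprb{S,\iota_b\eta}$ (with $\iota_b$ the contraction by $b$), and from the definition of boundary. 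Here and below $\dprb{S,\Lbb_b\omega}$ is read through the right-hand side, i.e.\ as $\dprb{S,\iota_b d\omega}+\dprb{\partial S,\iota_b\omega}$, which is meaningful whenever $\omega$ is Lipschitz in $x$. In particular a family $(T_t)$ obeying \eqref{eq:assumption_integrability_masses} solves \eqref{eq:PDE_weak_formulation} if and only if $\tfrac{\dd}{\dd t}\dprb{T_t,\omega}=\dprb{T_t,\Lbb_b\omega}$ in the sense of distributions on $(0,1)$, for every $\omega$.

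For the existence part, the integrability of $(X_t)_\ast\overline T$ follows from \eqref{eq:mass_of_pushforward}, from $\partial\big((X_t)_\ast\overline T\big)=(X_t)_\ast\partial\overline T$, and from the Gronwall bound $\Lip(X_t)\le e^{\Lip(b)\abs t}$, which keeps $\Mbf(T_t)$ and $\Mbf(\partial T_t)$ uniformly bounded on $[0,1]$. To check the equation I would mollify $b$ to $b^\eps\in\Crm^\infty$ with $\|b^\eps\|_\infty\le\|b\|_\infty$, $\Lip(b^\eps)\le\Lip(b)$, $b^\eps\to b$ locally uniformly, and let $X^\eps$ be its (smooth) flow. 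In the smooth setting the classical relation $\tfrac{\dd}{\dd t}(X^\eps_t)^\ast\omega=(X^\eps_t)^\ast\Lbb_{b^\eps}\omega$, paired against $\overline T$, shows that $T^\eps_t:=(X^\eps_t)_\ast\overline T$ solves \eqref{eq:GTE} with velocity $b^\eps$; one then passes to the limit $\eps\to0$ in the weak formulation. The inputs needed are: $T^\eps_t\weaksto T_t$ for each $t$ (stability of the Lipschitz pushforward of normal currents under locally uniform convergence of the maps, the masses of $T^\eps_t$ and $\partial T^\eps_t$ being bounded uniformly in $\eps,t$), dominated convergence in $t$, and the identity $\dprb{\Lcal_{b^\eps}T^\eps_t,\omega}=-\dprb{T^\eps_t,\iota_{b^\eps}d\omega}-\dprb{\partial T^\eps_t,\iota_{b^\eps}\omega}$ combined with the uniform convergences $\iota_{b^\eps}d\omega\to\iota_b d\omega$ and $\iota_{b^\eps}\omega\to\iota_b\omega$. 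Finally $X_t\to\Id$ locally uniformly gives $T_t\weaksto\overline T$ as $t\to0$, i.e.\ the initial condition.

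For the uniqueness part, let $(T_t)\subset\Nrm_k(\R^d)$ be any solution of \eqref{eq:initial_value_problem}. I would first upgrade \eqref{eq:PDE_weak_formulation} to time-dependent test forms --- approximating $\omega\in\Crm^1_c((0,1)\times\R^d;\Wedge_k(\R^d))$ by finite sums $\sum_j\psi_j(t)\omega_j(x)$ and using \eqref{eq:assumption_integrability_masses} --- to the effect that $\int_0^1\big(\dprb{T_t,\partial_t\omega_t}+\dprb{T_t,\Lbb_b\omega_t}\big)\,\dd t=0$. Then I would feed in an approximate solution of the backward equation $\partial_t\omega_t+\Lbb_b\omega_t=0$: for the mollified $b^\eps$ the smooth form $\omega^\eps_t:=(X^\eps_{-t})^\ast\omega_0$ solves $\partial_t\omega^\eps_t+\Lbb_{b^\eps}\omega^\eps_t=0$, because the flow of $b^\eps$ commutes with $\Lbb_{b^\eps}$. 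Testing with $\chi(t)\,\omega^\eps_t$, $\chi\in\Crm^1_c((0,1))$, and using $\dprb{T_t,\omega^\eps_t}=\dprb{(X^\eps_{-t})_\ast T_t,\omega_0}$, one gets
\[
\int_0^1\chi'(t)\,\dprb{(X^\eps_{-t})_\ast T_t,\omega_0}\,\dd t+\int_0^1\chi(t)\,\dprb{T_t,\Lbb_{b-b^\eps}\omega^\eps_t}\,\dd t=0 .
\]
The second integral is $O\big(\|b-b^\eps\|_{\infty,K}\big)$: indeed $\|\omega^\eps_t\|_\infty+\|d\omega^\eps_t\|_\infty$ is bounded uniformly in $\eps,t$ because $\|DX^\eps_{-t}\|_\infty\le e^{\Lip(b)}$, the supports stay in a fixed compact $K$, and $\int_0^1(\Mbf(T_t)+\Mbf(\partial T_t))\,\dd t<\infty$; hence it vanishes as $\eps\to0$. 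Using $(X^\eps_{-t})_\ast T_t\weaksto(X_{-t})_\ast T_t$ with uniform mass bounds, one concludes $\int_0^1\chi'(t)\,\dprb{(X_{-t})_\ast T_t,\omega_0}\,\dd t=0$ for every $\chi$ and $\omega_0$, so that $t\mapsto(X_{-t})_\ast T_t$ is constant, equal to the initial datum $\overline T$; by functoriality of the pushforward, $T_t=(X_t)_\ast(X_{-t})_\ast T_t=(X_t)_\ast\overline T$.

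The step I expect to be the main obstacle is the one that makes the above rigorous despite $X_{\pm t}$ being only Lipschitz: the pullback $(X_{-t})^\ast\omega_0$ and the Cartan identities for $\tfrac{\dd}{\dd t}(X_{\pm t})^\ast\omega$ have no classical meaning, so everything must be run through the smooth approximants and the commutator error $\int\chi\,\dprb{T_t,\Lbb_{b-b^\eps}\omega^\eps_t}\,\dd t$ controlled uniformly in $\eps$. This hinges on the dimension-free Gronwall bound for $\|DX^\eps_{\pm t}\|_\infty$, which keeps $\omega^\eps_t$ and $d\omega^\eps_t$ equibounded with uniformly compact support, and on the stability of the Lipschitz pushforward of normal currents under locally uniform convergence of the maps (via the homotopy formula, the discrepancy currents having mass $O(\|X^\eps_{\pm t}-X_{\pm t}\|_\infty)$). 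Secondary technical points are the passage to time-dependent test forms and the identification of the constant value of $t\mapsto(X_{-t})_\ast T_t$ with $\overline T$, where one uses the weak$^\ast$-continuity of $t\mapsto T_t$ recalled after \eqref{eq:initial_value_problem} and the continuity of the Lipschitz pushforward operation.
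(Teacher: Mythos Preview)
The paper does not actually prove this theorem: it is stated as a recall of the main result of~\cite{BDNR2}, with no proof given here. So there is no ``paper's own proof'' to compare against in this manuscript.

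That said, your proposal is correct and follows what is essentially the argument of~\cite{BDNR2,BDNR}. The duality identity $\dprb{\Lcal_b S,\omega}=-\dprb{S,\Lbb_b\omega}$, the smooth-flow computation $\tfrac{\dd}{\dd t}(X^\eps_t)^\ast\omega=(X^\eps_t)^\ast\Lbb_{b^\eps}\omega$, and the passage to the limit via stability of the Lipschitz pushforward of normal currents are exactly the ingredients used there (and are also the ones the present paper invokes in its proof of the inhomogeneous Theorem~\ref{thm:duhamel}). Your uniqueness argument via the backward dual equation $\partial_t\omega^\eps_t+\Lbb_{b^\eps}\omega^\eps_t=0$ and the commutator estimate $\dprb{T_t,\Lbb_{b-b^\eps}\omega^\eps_t}=O(\|b-b^\eps\|_{\infty,K})$ is likewise the standard route; the control on $\|\omega^\eps_t\|_\infty+\|d\omega^\eps_t\|_\infty$ through the Gronwall bound on $\|DX^\eps_{-t}\|_\infty$ is precisely what is needed, and the identification of the constant value with $\overline T$ via weak$^\ast$-continuity at $t=0$ is the right closing step. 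The technical point you flag as the main obstacle---running Cartan's calculus through the smooth approximants because $(X_{-t})^\ast\omega_0$ has no classical meaning---is indeed the crux, and you have identified the correct tools (homotopy formula, uniform mass bounds, locally uniform convergence $X^\eps_{\pm t}\to X_{\pm t}$) to handle it.
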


Later in this work, we will need the following existence and uniqueness result for the non-homogenenous problem (for our purposes it is enough to state it for smooth velocity fields):

\begin{theorem}\label{thm:duhamel} 
	Let $b\colon \R^d \to \R^d$ be a globally bounded and smooth vector field with flow $X_t = X(t,\frarg) \colon \R^d \to \R^d$ and let $\overline T \in \Mrm_k(\R^d)$ be a $k$-dimensional current on $\R^d$. Let $t \mapsto R_t$ be a path of currents with 
	\[
	\int_0^1 \Mbf(R_t) \, \dd t < \infty. 
	\]
	Then, the initial-value problem 
	\begin{equation}\label{eq:inhomogenous_initial_value_problem}
		\left\{\begin{aligned}
			\frac{\dd}{\dd t}T_t+\Lcal_b T_t &=R_t,  \qquad t \in (0,1),\\
			T_0 &=\overline T,
		\end{aligned}\right.
	\end{equation}
	admits a solution $(T_t)_{t \in (0,1)} \subset \Mrm_k(\R^d)$, which is unique in the class of finite mass $k$-currents. The solution has the following Duhamel-type representation formula 
	\begin{equation}\label{eq:duhamel_formula} 
		T_t =(X_t)_*\overline T + \int_0^t (X_{t-s})_{*}(R_s) \, \dd s, \qquad t \in (0,1).
	\end{equation}
\end{theorem}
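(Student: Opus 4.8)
The plan is to reduce the inhomogeneous problem to the homogeneous one via a Duhamel/variation-of-constants argument, using the linearity of the Lie-derivative operator $\Lcal_b$ and the smoothness of $b$ (which makes $(X_t)_*$ a well-behaved one-parameter group of operators on finite-mass currents). First I would \emph{verify that the proposed formula \eqref{eq:duhamel_formula} defines a finite-mass current for each $t$}: by the mass estimate \eqref{eq:mass_of_pushforward}, $\Mbf((X_t)_*\overline T) \le \|DX_t\|_\infty^k\,\Mbf(\overline T)$ and $\Mbf((X_{t-s})_*R_s) \le \|DX_{t-s}\|_\infty^k \Mbf(R_s)$; since $b$ is smooth and bounded, $\|DX_r\|_\infty$ is bounded uniformly for $r$ in the compact interval $[0,1]$ (Gronwall), so the integrand $s \mapsto \Mbf((X_{t-s})_*R_s)$ is dominated by a constant times $\Mbf(R_s) \in \Lrm^1(0,1)$, hence the Bochner-type integral $\int_0^t (X_{t-s})_*(R_s)\,\dd s$ converges in mass and $t\mapsto T_t$ satisfies $\int_0^1 \Mbf(T_t)\,\dd t < \infty$.

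Next I would \emph{check that this $T_t$ solves \eqref{eq:inhomogenous_initial_value_problem} in the weak sense}, analogous to \eqref{eq:PDE_weak_formulation}. Fix $\omega \in \Dscr^k(\R^d)$. Using the smoothness of $b$, the pairing $t \mapsto \dprb{(X_t)_*\overline T, \omega} = \dprb{\overline T, X_t^*\omega}$ is $\Crm^1$ in $t$ with derivative $-\dprb{\Lcal_b (X_t)_*\overline T, \omega}$ — this is exactly the content of Theorem~\ref{thm:main_JFA} specialised to smooth fields (one differentiates the pullback $X_t^*\omega$ using the Cartan formula $\tfrac{\dd}{\dd t}X_t^*\omega = X_t^*(\mathcal L_b \omega)$, which is dual to $\Lcal_b$ on currents). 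For the Duhamel term, I would differentiate $\int_0^t \dprb{(X_{t-s})_*R_s, \omega}\,\dd s$ in $t$: formally this yields $\dprb{R_t,\omega}$ (the boundary term at $s=t$, using $X_0 = \Id$) plus $\int_0^t \tfrac{\dd}{\dd t}\dprb{(X_{t-s})_*R_s,\omega}\,\dd s = -\int_0^t \dprb{\Lcal_b (X_{t-s})_*R_s, \omega}\,\dd s$. Making this rigorous requires justifying differentiation under the integral sign and handling the Lebesgue point / almost-everywhere nature of the boundary term when $R_s$ is merely $\Lrm^1$ in $s$; this is cleanest done by testing against $\psi \in \Crm^1_c((0,1))$, integrating by parts in $t$, and applying Fubini — which is the standard way \eqref{eq:PDE_weak_formulation} is verified. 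Combining the two contributions gives $\tfrac{\dd}{\dd t}T_t + \Lcal_b T_t = R_t$ weakly, and the initial condition $T_0 = \overline T$ follows since the integral term vanishes at $t=0$ and $(X_0)_*\overline T = \overline T$, with weak$^*$-continuity at $0$ coming from the mass bounds.

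Finally I would \emph{prove uniqueness in the class of finite-mass currents}. Given two solutions, their difference $S_t$ solves the homogeneous equation $\tfrac{\dd}{\dd t}S_t + \Lcal_b S_t = 0$ with $S_0 = 0$ and $\int_0^1 \Mbf(S_t)\,\dd t < \infty$. Here I cannot directly invoke Theorem~\ref{thm:main_JFA}, which is stated for \emph{normal} currents, so the point is that for \emph{smooth} $b$ uniqueness holds already in the larger class of finite-mass currents: one can either push the difference $S_t$ backward along the flow, setting $\widetilde S_t := (X_{-t})_* S_t$, and show using the weak formulation and smoothness of $b$ that $t \mapsto \dprb{\widetilde S_t, \omega}$ is constant (its distributional derivative vanishes because the $\Lcal_b$ terms cancel the derivative of the pullback), hence $\widetilde S_t \equiv \widetilde S_0 = 0$; or one invokes the duality method, solving the backward transport equation $\tfrac{\dd}{\dd s}\omega_s + \mathcal L_b \omega_s = 0$ with smooth terminal datum (solvable by the smooth flow) and pairing it against $S_s$. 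The main obstacle I anticipate is precisely this last point — establishing uniqueness in the finite-mass (rather than normal) class — together with the careful justification of the time-differentiation of the Duhamel integral when $R_s \in \Lrm^1_t \Mrm_k$ only; both are handled by exploiting the smoothness of $b$, which upgrades all the pullback/pushforward maps to genuinely differentiable one-parameter families and lets the weak formulation be manipulated freely.
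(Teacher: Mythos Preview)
Your proposal is correct and follows essentially the same route as the paper: verify that the Duhamel formula solves the equation by differentiating $t\mapsto\dprb{T_t,\omega}$ via the identity $\tfrac{\dd}{\dd t}(X_{t-s})^*\omega=(X_{t-s})^*(\Lcal_b\omega)$ (the paper sets $G(s,t):=\dprb{R_s,(X_{t-s})^*\omega}$ and applies dominated convergence to justify differentiating $\int_0^t G(s,t)\,\dd s$), and then obtain uniqueness by subtracting two solutions and reducing to the homogeneous problem. The only minor divergence is in the uniqueness step: the paper simply cites \cite[Theorem~3.6]{BDNR} (which covers smooth $b$ in the finite-mass class), whereas you correctly flag that Theorem~\ref{thm:main_JFA} is stated for normal currents and sketch the direct argument via the backward pushforward $\widetilde S_t=(X_{-t})_*S_t$ or the dual backward equation---both of which are exactly the content of that cited result.
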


Recall that, in the case of smooth and autonomous vector fields, the Lie Derivative of a current can be equivalently defined by duality as 
\[
\dpr{\Lcal_bT, \omega} := -\dpr{T, \Lcal_b \omega}, \quad \text{ for each $\omega \in \Dscr^k(\R^d)$}.
\]
As a consequence, an element of $\Lrm^{1}(\Mrm_k(\R^d))$ is a weak solution to the equation in \eqref{eq:inhomogenous_initial_value_problem} if and only if for each $\omega \in \Dscr^k(\R^d)$ the map $t \mapsto \dpr{T_t, \omega}$ is absolutely continuous on $(0,1)$ and the following equality holds for $\L^1$-a.e. $t \in (0,1)$: 
\[
\frac{\dd }{\dd t} \dpr{T_t, \omega} = \dpr{T_t, \Lcal_b \omega} + \dpr{R_t,\omega}.
\]
See, e.g., \cite[Lemma 3.3]{BDNR}.
\begin{proof}[Proof of Theorem \ref{thm:duhamel}]
	Without loss of generality we can assume $\overline{T}=0$. 
	We begin by proving that \eqref{eq:duhamel_formula} defines indeed a solution to \eqref{eq:inhomogenous_initial_value_problem}. Fix a test $k$-form $\omega \in\Dscr^k(\R^d)$ and call $G: (0,1) \times (0,1) \to \R$ the map  
	\[
	G(s,t) := \dpr{(X_{t-s})_{*} R_s, \omega} = \dpr{R_s, (X_{t-s})^*\omega}. 
	\] 
	Clearly $G \in \Lrm^1((0,1)\times (0,1))$ and, for a.e. $s \in [0,1]$, the map $G(s,\frarg)$ is smooth on $(0,1)$. By some elementary computations -- see, e.g. the proof \cite[Theorem 3.6]{BDNR} -- it holds 
	\[
	\frac{\dd}{\dd t} (X_{t-s})^*\omega = (X_{t-s})^* (\Lcal_b \omega),
	\]
	which implies for $\L^1$-a.e. $s \in (0,1)$ and for all $t \in (0,1)$ 
	\[
	\frac{\partial G}{\partial t}(s,t) = \dpr{R_s, (X_{t-s})^* (\Lcal_b \omega)}. 
	\]
	By a straightforward application of the Lebesgue Dominated Convergence Theorem one can conclude that 
	the map 
	\[
	t \mapsto \int_0^t G(s,t) \, \dd s =\dpr{T_t,\omega} 
	\] 
	is absolutely continuous on $(0,1)$, with derivative given by 
	\[
	\frac{\dd}{\dd t} \dpr{T_t,\omega} = G(t,t) + \int_0^t \frac{\partial G}{\partial t}(s,t) \, \dd s \qquad \text{for $\L^1$-a.e. $t \in (0,1)$,}
	\]
	which means 
	\begin{align*}
		\frac{\dd}{\dd t} \dpr{T_t,\omega} & = \dpr{R_t, \omega} + \int_0^t  \dpr{R_s,  (X_{t-s})^* (\Lcal_b \omega)} \, \dd s \\ 
		& = \dpr{R_t, \omega} + \int_0^t  \dpr{(X_{t-s})_*R_s,  \Lcal_b \omega} \, \dd s  \\ 
		& = \dpr{R_t, \omega} +\dpr{T_t,  \Lcal_b \omega}
	\end{align*}
	as desired. 
	
	The uniqueness part is trivial: if $T^1_t$ and $T_t^2$ are two solutions to \eqref{eq:inhomogenous_initial_value_problem} (with $\overline{T}=0$) then their difference $S_t$ solves the homogeneous equation 
	\[
	\frac{\dd}{\dd t}T_t+\Lcal_b T_t = 0,  \qquad t \in (0,1)
	\]
	with trivial initial datum. By uniqueness for the homogeneous problem -- see, e.g., \cite[Theorem 3.6]{BDNR} -- the conclusion follows. \end{proof}

\subsection{A lemma on absolutely continuous functions}
This technical lemma will be needed in the following. The proof is standard and therefore omitted.
\begin{lemma}\label{lemma:AC_curves_composition} Let $f \colon \R^d \to \R$ be a Lipschitz function and let $\gamma \in \AC([0,1]; \R^d)$ be an absolutely continuous curve. Then the composition $f \circ \gamma \colon [0,1] \to \R$ is absolutely continuous and, in particular, it holds 
	\[
	f(\gamma(b))-f(\gamma(a)) = \int_a^b \frac{\dd}{\dd s} f(\gamma(s)) \dd s, \qquad \text{ for every } a,b \text{ with} \ 0 \le a \le b \le 1. 
	\]
\end{lemma}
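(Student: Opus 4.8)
The argument is completely standard, so the plan is just to assemble the two familiar ingredients. Set $L := \Lip(f)$. The first thing I would record is the pointwise bound
$$\absb{f(\gamma(t)) - f(\gamma(s))} \le L\,\absb{\gamma(t) - \gamma(s)}, \qquad s,t \in [0,1],$$
which follows at once from the Lipschitz property of $f$.

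Next I would deduce absolute continuity of $f\circ\gamma$ directly from the $\eps$--$\delta$ definition. Fix $\eps > 0$ and apply the absolute continuity of $\gamma$ with threshold $\eps/L$ to get $\delta > 0$ such that $\sum_i \absb{\gamma(b_i) - \gamma(a_i)} < \eps/L$ whenever $(a_i,b_i)$ are pairwise disjoint subintervals of $[0,1]$ with $\sum_i (b_i - a_i) < \delta$; combining this with the bound above gives $\sum_i \absb{f(\gamma(b_i)) - f(\gamma(a_i))} \le L \sum_i \absb{\gamma(b_i) - \gamma(a_i)} < \eps$, so $f\circ\gamma \in \AC([0,1];\R)$.

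Finally, the fundamental theorem of calculus for real-valued absolutely continuous functions applies: $f\circ\gamma$ is differentiable at $\L^1$-a.e.\ $s$, the derivative $\frac{\dd}{\dd s}f(\gamma(s))$ lies in $\Lrm^1([0,1])$, and $f(\gamma(b)) - f(\gamma(a)) = \int_a^b \frac{\dd}{\dd s}f(\gamma(s))\,\dd s$ for all $0 \le a \le b \le 1$. There is no genuine obstacle; the only point worth flagging is that one should not write $\frac{\dd}{\dd s}f(\gamma(s)) = Df(\gamma(s))[\dot\gamma(s)]$, since $f$ may fail to be differentiable at $\gamma(s)$ on a parameter set of positive measure — but the statement concerns only the derivative of the scalar composition, which exists a.e.\ regardless, so this subtlety does not enter.
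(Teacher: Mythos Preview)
Your argument is correct and entirely standard; the only trivial omission is the case $L=0$, where $f$ is constant and the claim is immediate. The paper in fact omits the proof altogether, remarking only that it is standard, so there is nothing to compare against --- your write-up is precisely the kind of routine verification the authors had in mind.
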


\section{The Vector Advection Equation and the Geometric Transport Equation}\label{s:vae}

We begin this section by collecting a useful formula that connects the Lie derivative of a current with the Lie bracket in the smooth setting. We then introduce the Vector Advection Equation \eqref{eq:vector_advection}, together with the definition of weak solutions. This will allow us to establish a bridge with the Geometric Transport Equation and to derive several corollaries: in the smooth case we obtain an existence and uniqueness result for \eqref{eq:vector_advection}, while in the case of Lipschitz velocity fields we prove a uniqueness result together with a representation formula for the solution. 

\subsection{On the Lie derivative and the Lie Bracket of smooth vector fields}

Within this section we let $b,v \colon \R^d \to \R^d$ be smooth vector fields. The \textbf{Lie Bracket} between $b$ and $v$ is defined by 
\[
[b,v](x) := \nabla b(x) \cdot v(x)  - \nabla v (x)\cdot b (x), \qquad x \in \R^d. 
\]
Recall that, given a vector field $v \colon \R^d \to \R^d$ we denote by $T_v:=v\L^d$ the canonical 1-current associated with $v$. In particular, if $v$ is smooth we have $
\partial T_v = -(\dive v)\L^d$ in the sense of $0$-currents on $\R^d$. 

For future reference, we collect in the following proposition an explicit expression for the Lie derivative of $T_v$. 
\begin{lemma}\label{lemma:lie_bracket}
	Let $v,b \colon \R^d \to \R^d$ be vector fields of class $\Crm^\infty(\R^d;\R^d)$. Then 
	\[
	\Lcal_{b} (T_v) = (-[b,v] +(\dive b) v) \L^d
	\]
	in the sense of $1$-currents on $\R^d$.
\end{lemma}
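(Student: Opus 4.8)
The plan is to compute $\Lcal_b(T_v)$ directly from its definition $\Lcal_b T = -\partial(b\wedge T) - b\wedge \partial T$, reducing everything to the elementary rule for the boundary of an absolutely continuous current. First I would record the two building blocks. Since $T_v = v\L^d$, the wedge product gives the absolutely continuous $2$-current $b\wedge T_v = (b\wedge v)\L^d$, and the identity $\partial T_v = -(\dive v)\L^d$ together with the definition of the wedge product of a vector with a current yields $b\wedge\partial T_v = -(\dive v)\,b\,\L^d$. Hence
\[
\Lcal_b(T_v) = -\partial\big((b\wedge v)\L^d\big) + (\dive v)\,b\,\L^d,
\]
and the whole problem is reduced to identifying the boundary of the smooth absolutely continuous $2$-current $(b\wedge v)\L^d$.

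For the second step I would establish the general formula $\partial(w\L^d) = -(\dive w)\L^d$ for a smooth $2$-vector field $w = \sum_{i<j} w_{ij}\,e_i\wedge e_j$, where the divergence of a $2$-vector field is the $1$-vector field with components $(\dive w)_k = \sum_i \partial_i w_{ik}$, the coefficients being extended antisymmetrically by $w_{ik} = -w_{ki}$. This is obtained by testing $\partial(w\L^d)$ against an arbitrary $1$-form $\omega = \sum_k \omega_k\,dx^k$, writing $d\omega = \sum_{i<k}(\partial_i\omega_k - \partial_k\omega_i)\,dx^i\wedge dx^k$, pairing with $w$, rewriting the sum over ordered pairs $(i,k)$ as $\sum_{i,k} w_{ik}\,\partial_i\omega_k$, and integrating by parts once (no boundary terms, since $\omega$ is compactly supported). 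This is the only computation carrying any real risk of sign errors, so it is where I would be most careful; conceptually it is just the antisymmetric analogue of the familiar identity $\partial(v\L^d) = -(\dive v)\L^d$.

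Finally I would expand $\dive(b\wedge v)$ via the Leibniz rule. With the antisymmetric coefficients $w_{ik} = b_i v_k - b_k v_i$ one obtains
\[
(\dive(b\wedge v))_k = \sum_i \partial_i(b_i v_k - b_k v_i) = (\dive b)\,v_k + (b\cdot\nabla)v_k - (v\cdot\nabla)b_k - (\dive v)\,b_k,
\]
that is, $\dive(b\wedge v) = (\dive b)\,v - [b,v] - (\dive v)\,b$, where I use the paper's convention $[b,v] = \nabla b\cdot v - \nabla v\cdot b = (v\cdot\nabla)b - (b\cdot\nabla)v$. Substituting $\partial\big((b\wedge v)\L^d\big) = -\dive(b\wedge v)\,\L^d$ into the displayed identity for $\Lcal_b(T_v)$, the two terms $(\dive v)\,b$ cancel and we are left with $\Lcal_b(T_v) = (-[b,v] + (\dive b)\,v)\L^d$, which is exactly the claim. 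I expect no genuine obstacle: the statement is a bookkeeping identity, and the only care needed is in matching the two sign conventions, namely the one in the boundary formula for $w\L^d$ and the one in the paper's definition of the Lie bracket. (An alternative route, which I would fall back on if the coordinate computation became unwieldy, is to use the duality $\dpr{\Lcal_b T,\omega} = -\dpr{T,\Lcal_b\omega}$ together with Cartan's formula $\Lcal_b\omega = d(b\,{\intprod}\,\omega) + b\,{\intprod}\,d\omega$ on $1$-forms and integrate by parts, but the direct approach seems shortest here.)
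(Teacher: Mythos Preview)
Your proof is correct and follows essentially the same route as the paper's: both start from the definition $\Lcal_b T_v = -\partial(b\wedge T_v) - b\wedge\partial T_v$, reduce to computing $\partial((b\wedge v)\L^d)$ by testing against a $1$-form and integrating by parts, and then expand via the Leibniz rule. The only cosmetic difference is that the paper packages the intermediate object as $\dive(b\otimes v - v\otimes b)$ whereas you write it as $-\dive(b\wedge v)$; these are the same vector field, and the cancellation of the $(\dive v)\,b$ terms proceeds identically.
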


\begin{proof} A direct computation shows that 
	\begin{align*}
		\Lcal_{b} (T_v) & = - \partial (b \wedge T_v) - b \wedge \partial T_v \\ 
		& = - \partial (b \wedge T_v) + (b \dive  v) \L^d\\ 
		& = - \partial ((b \wedge v) \L^d) + (b \dive  v) \L^d \\ 
		& = - (\dive (b \otimes v - v \otimes b) + b \dive v) \L^d \\
		& =  ((\dive b) v - (\dive v) b - [b,v] + b (\dive v))\L^d\\
		& = ((\dive b) v - [b,v])\L^d.
	\end{align*}
	Here we have used the following two identities:
	\begin{equation}\label{eq:identity_1}
		\partial ((b \wedge v) \L^d) = \dive (b \otimes v - v \otimes b)\L^d
	\end{equation}
	and 
	\begin{equation}\label{eq:identity_2}
		\dive (b \otimes v - v \otimes b) =  [b,v] + b(\dive v) - (\dive b)v. 
	\end{equation}
	Let us prove these two identities. Equality \eqref{eq:identity_1} follows testing both expressions on a test form $\omega$. By linearity it is enough to choose $\omega=\omega_i dx^i$ (for a fixed $i \in \{1,\ldots, d\}$). We have 
	\begin{align*}
		\langle \partial ((b \wedge v) \L^d), \omega \rangle  & = \int_{\R^d} \langle b \wedge v, d\omega \rangle \, \dd x \\ 
		& = \sum_j  \int_{\R^d}\frac{\partial \omega_i}{\partial x_j} \langle b \wedge v,  dx^j\wedge dx^i \rangle \, \dd x \\ 
		& = \sum_j  \int_{\R^d}\frac{\partial \omega_i}{\partial x_j} (b^j v^i - b^i v^j) \dd x\\ 
		& = - \sum_j \int_{\R^d} \frac{\partial \omega_i }{\partial x_j}  (b^iv^j - b^jv^i) \, \dd x \\ 
		& = - \int_{\R^d} \nabla \omega_i \cdot (b \otimes v - v \otimes b)_i \, \dd x \\
		& = \langle \dive (b \otimes v - v \otimes b), \omega \rangle. 
	\end{align*}
	Identity \eqref{eq:identity_2} follows instead from the Leibniz rule, as 
	\begin{align*}
		\dive (b \otimes v - v \otimes b)_{i} - [b,v]_{i} & = \sum_j  \partial_j (b^iv^j - b^jv^i) - (\partial_j b^i v^j - \partial_j v^i b^j) \\ 
		& = \sum_j b^i \partial_j v^j - \partial_j b^j v^i \\ 
		& = b^i(\dive v) - v^i \dive b, 
	\end{align*}
	for every $i=1,\ldots, d$. This clearly gives \eqref{eq:identity_2}.
\end{proof}

\begin{remark} It is worth remarking that the Lie Derivative $\Lcal_b(T_v)$ is \emph{not} anti-symmetric w.r.t. to $b$ and $v$, contrary to what happens with the standard Lie Derivative operator in Differential Geometry ($\Lcal_b v = [b,v] = - [v,b] = - \Lcal_v b$). In particular, it is not true in general that $\Lcal_b T_b = 0$, unless $\dive b= 0$. This is due to the fact that the Lie Derivative of the current $T_v$ keeps track also of the evolution of the mass measure of $T_v$ along the flow of $b$. The Lebesgue measure is not preserved under the flow of a Lipschitz field, but changes indeed according to a quantity measured by the divergence of the field -- and this explains the presence of the additional summand in the formula.
\end{remark}

\subsection{Vector Advection Equation}

We now turn our attention to the Vector Advection Equation. Throughout this section we consider a fixed velocity field $b \colon \R^d \to \R^d$, which is assumed to be (at least) Lipschitz and globally bounded. Its flow map at time $t$ and position $x$ is denoted by $X(t,x)=X_t(x)$.

If $v \in \Wrm^{1,1}(\R^d)$, the definition of Lie Bracket between $v$ and $b$ still makes sense at $\L^d$-almost every $x \in \R^d$ and the vector field $[v,b]$ belongs to $\Lrm^1(\R^d;\R^d)$. This allows one to consider the initial value problem for the {\bfseries vector-advection equation}, namely
\begin{equation}\label{eq:vector_advection}\tag{VAE}
	\left\{\begin{aligned}
		\frac{\dd}{\dd t} v_t + [v_t,b] &=0,  \qquad t \in (0,1),\\
		v_0 &=\overline{v},
	\end{aligned}\right.
\end{equation}
where $\overline{v}$ is a given initial datum. 

\begin{definition}[Weak solutions to \eqref{eq:vector_advection}] We say that a time-dependent family of vector fields $v \in \Crm^0([0,1];\Wrm^{1,1}(\R^d))$ is a \emph{weak solution} to
	\[
	\frac{\dd}{\dd t} v_t + [v_t,b] = 0 
	\]  
	if, for every $\Psi \in \Crm^\infty_c((0,1)\times \R^d;\R^d)$, it holds 
	\begin{equation}\label{eq:weak_VAE}
		\int_0^1 \int_{\R^d} v(t,x) \cdot \partial_t \Psi(t,x)  - [v_t,b](x) \cdot \Psi(t,x) \, \dd t \, \dd x = 0.
	\end{equation}
	In this case, we also say that $v$ is {\bfseries advected} by $b$.
\end{definition}

We remark that the continuity in time of the family $v_t$ is not essential in the definition of weak solution, as one can make sense of \eqref{eq:weak_VAE} also for $v \in \Lrm^1_t (W^{1,1}_x)$.
However, later in the paper we will often consider constant-in-time weak solutions and thus the definition above suffices for our purposes. The continuity in time of the family allows also one to naturally retrieve the initial condition in a pointwise sense: If the family $v$ is advected by $b$ and $v_0 = \overline{v}$ as elements of $\Wrm^{1,1}(\R^d)$, we say that $v$ is a weak solution to the initial value problem \eqref{eq:vector_advection}.

\subsection{From the Vector Advection Equation to the Geometric Transport Equation}

We now show that \eqref{eq:vector_advection} has a close connection with \eqref{eq:GTE}. Recall that the density $\rho \colon \R\times \R^d \to \R^+$ of the flow $X_t$ is defined by 
\[
\rho(t,x) := \frac{1}{\det(\nabla X_t)(X_{-t}(x))}. 
\]
The following propositions play a pivotal role because they allow to pass from a solution to \eqref{eq:vector_advection} to a solution to \eqref{eq:GTE}. We begin by studying the situation in the case of smooth velocity fields. 
%% Serve b almeno C^2 così la densità è C^1 

\begin{lemma}\label{lemma:VAE_to_GTE}  Let $b \in \Crm^{\infty}(\R^d;\R^d)$ and $v \in \Crm^0([0,1];\Wrm^{1,1}(\R^d))$ be a family advected by $b$. Define the vector field 
	\[
	w(t,x) :=\rho(t,x)v(t,x)
	\]
	and let $T_{w_t}$ be the 1-current canonically associated to $w_t$. Then $t\mapsto T_{w_{t}}$ is a path of normal 1-currents transported by $b$, i.e. it holds 
	\begin{equation}\label{eq:GTE_for_w}
		\begin{aligned}
			\frac{\dd}{\dd t} T_{w_t} + \Lcal_{b}(T_{w_t}) & = 0,  \qquad t \in (0,1).
		\end{aligned}
	\end{equation} 
\end{lemma}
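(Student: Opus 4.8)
The plan is to verify \eqref{eq:GTE_for_w} by reducing it to the pointwise identity
\[
\partial_t w \;=\; [b,w_t] - (\dive b)\,w_t \qquad \text{for }\L^1\text{-a.e.\ }t\text{ and }\L^d\text{-a.e.\ }x,
\]
and then translating this identity into the weak formulation \eqref{eq:PDE_weak_formulation}. Indeed, by Lemma~\ref{lemma:lie_bracket} (applied with $u=w_t$, see below) one has $\Lcal_b(T_{w_t}) = \bigl(-[b,w_t] + (\dive b)w_t\bigr)\L^d$, so the displayed identity says precisely that $\frac{\dd}{\dd t}T_{w_t} + \Lcal_b(T_{w_t}) = 0$ at the level of the densities.

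As a preliminary step, I would record that, since $b \in \Crm^\infty(\R^d;\R^d)$ is bounded with bounded derivatives, the flow $X$ is smooth in $(t,x)$, hence $\rho$ is smooth on $[0,1]\times\R^d$ with $\rho$, $1/\rho$ and $\nabla\rho$ bounded, and $\rho$ solves the continuity equation $\partial_t\rho + \dive(\rho b)=0$ classically. It then follows that $w=\rho v \in \Crm^0([0,1];\Wrm^{1,1}(\R^d))$ (multiplication by $\rho$ being continuous on $\Wrm^{1,1}$), so $t\mapsto T_{w_t}$ is a weakly$^*$-continuous path of normal $1$-currents with $\int_0^1 \Mbf(T_{w_t}) + \Mbf(\partial T_{w_t})\,\dd t < \infty$; in particular \eqref{eq:assumption_integrability_masses} holds. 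I would also note that Lemma~\ref{lemma:lie_bracket} extends verbatim to any $u \in \Wrm^{1,1}(\R^d;\R^d)$ with $b$ smooth: the integration by parts establishing \eqref{eq:identity_1} and the Leibniz rule \eqref{eq:identity_2} only require $u\in\Wrm^{1,1}$, so $\Lcal_b(T_u) = \bigl(-[b,u] + (\dive b)u\bigr)\L^d$ (alternatively, mollify $u$ and pass to the limit in mass). Finally, the Leibniz rule $\nabla(\rho v) = v\otimes\nabla\rho + \rho\,\nabla v$ gives, by a direct computation, $[b,w_t] = \rho\,[b,v_t] - (b\cdot\nabla\rho)\,v_t$ a.e.

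The core is the distributional computation of $\partial_t w$. Testing the weak formulation \eqref{eq:weak_VAE} against the vector field $\rho\,\Phi$, with $\Phi \in \Crm^\infty_c((0,1)\times\R^d;\R^d)$ (admissible since $\rho$ is smooth and $\rho\Phi$ has compact support) and using $\rho\,\partial_t\Phi = \partial_t(\rho\Phi) - (\partial_t\rho)\Phi$, one obtains that $\partial_t w = \rho\,\partial_t v + (\partial_t\rho)\,v$ holds in the sense of distributions, where $\partial_t v = -[v_t,b] = [b,v_t] \in \Lrm^1$. Plugging in the continuity equation $\partial_t\rho = -b\cdot\nabla\rho - \rho\,\dive b$ yields
\[
\partial_t w \;=\; \rho\,[b,v_t] - (b\cdot\nabla\rho)\,v_t - (\dive b)\,\rho\,v_t .
\]
Adding to this the expression $\Lcal_b(T_{w_t}) = \bigl(-[b,w_t] + (\dive b)w_t\bigr)\L^d$ and substituting $[b,w_t] = \rho[b,v_t] - (b\cdot\nabla\rho)v_t$, all six terms cancel in pairs, which proves the pointwise identity. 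Integrating by parts in $t$ against $\psi\in\Crm^1_c((0,1))$ and pairing with $\omega\in\Dscr^1(\R^d)$ — using $\langle\Lcal_b S,\omega\rangle = -\langle S,\Lcal_b\omega\rangle$ together with the formula for $\Lcal_b(T_{w_t})$ — turns it into \eqref{eq:PDE_weak_formulation}, completing the proof.

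The step requiring the most care is the distributional one: one must make sure that \eqref{eq:weak_VAE}, stated only for genuinely compactly supported smooth test fields, may be applied to $\rho\Phi$ (this is exactly where smoothness of $b$, hence of the flow and of $\rho$, is used), and that the product rule for $\partial_t(\rho v)$ and the Leibniz rule for $\dive(\rho v)$ remain valid for the merely $\Wrm^{1,1}$-regular $v$; together with the small remark that Lemma~\ref{lemma:lie_bracket} survives the passage from smooth to $\Wrm^{1,1}$ vector fields. Everything else is bookkeeping with the classical Lie bracket.
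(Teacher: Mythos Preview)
Your proposal is correct and follows essentially the same route as the paper: both arguments test \eqref{eq:weak_VAE} against $\rho$ times a smooth compactly supported vector field (exploiting the smoothness of $\rho$ coming from the smoothness of $b$), use the continuity equation for $\rho$, invoke the $\Wrm^{1,1}$-extension of Lemma~\ref{lemma:lie_bracket}, and rely on the Leibniz identity $[b,\rho v_t]=\rho[b,v_t]-(b\cdot\nabla\rho)v_t$ to cancel all terms. The only difference is organisational: the paper works directly in the weak formulation \eqref{eq:PDE_weak_formulation} with test forms $\psi(t)\alpha(x)$ and cancels the integrals, whereas you first isolate the pointwise a.e.\ identity $\partial_t w = [b,w_t]-(\dive b)w_t$ and then pair with $\psi\otimes\omega$.
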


\begin{proof}
	Since $b$ is smooth, so is the density $\rho$. This yields immediately that, for every $t \in [0,1]$, we have 
	\[
	\partial T_{w_t} = - \dive(\rho_t v_t) = - \nabla \rho_t \cdot v_t - \rho_t \dive v_t \in \Lrm^1 (\R^d)  
	\]
	and thus the currents $T_{w_t}$ are normal. 
	Let us now show that they solve the equation \eqref{eq:GTE_for_w}. 
	
	First recall that the (smooth) function $\rho$ solves the continuity equation driven by $b$, therefore it holds 
	\begin{equation}\label{eq:continuity_equation}
		\partial_t \rho(t,x) = -\dive(\rho b)(t,x)
	\end{equation}
	pointwise everywhere. 
	
	We denote by $\alpha =\sum_i \alpha_i dx^i$ a smooth, compactly supported 1-form and by $a := \sum_i \alpha_i \ee^i \in \Crm^\infty_c(\R^d;\R^d)$ the associated 1-vector field. Let $\psi \in \Crm^\infty_c((0,1))$ be a test function. Observe that, since $\rho$ is smooth the function $\Psi(t,x) := \rho(t,x) \psi(t) a(x)$ is an admissible test function in \eqref{eq:weak_VAE} and we thus have 
	\begin{align*}
		0 & = \int_0^1 \int_{\R^d} v(t,x) \cdot \partial_t \Psi(t,x)  - [v_t,b](x) \cdot \Psi(t,x) \, \dd t \, \dd x \\ 
		& = \int_0^1 \int_{\R^d} v(t,x) \cdot a(x) \rho(t,x) \psi'(t)  +  v(t,x) \cdot a(x)\psi(t) \partial_t \rho(t,x) \\ 
		& \qquad -  \rho(t,x) \psi(t) [v_t,b](x) \cdot a(x) \, \dd t \, \dd x \\ 
		& = \int_0^1 \int_{\R^d} v(t,x) \cdot a(x) \rho(t,x) \psi'(t)  -  v(t,x) \cdot a(x)\psi(t)\dive(\rho b)(t,x) \\ 
		& \qquad - \rho(t,x) \psi(t) [v_t,b](x) \cdot a(x) \, \dd t \, \dd x \\ 
	\end{align*}
	where in the last line we have taken into account \eqref{eq:continuity_equation}. Integrating by parts the second summand and rearranging gives 
	\begin{align}
		\begin{aligned}\label{eq:utile}
			& \int_0^1 \int_{\R^d}  v(t,x) \cdot a(x) \rho(t,x) \psi'(t)  \, \dd t \, \dd x \\ 
			= & \int_0^1 \int_{\R^d}  \left[ \rho(t,x)  \psi(t) [v_t,b](x) \cdot a(x) -  \psi(t) \rho(t,x) b(x) \cdot \nabla (v(t,x) \cdot a(x)) \right] \, \dd t \, \dd x.	
		\end{aligned}
	\end{align}
	We are now ready to write the single summands of \eqref{eq:GTE_for_w}: on the one hand, directly from \eqref{eq:utile} we have 
	\begin{equation}\label{eq:time}
		\begin{split}
			\int_0^1 \dprb{ T_{w(t)},\alpha} \, \psi'(t) \;\dd t  & =  \int_0^1 \int_{\R^d} w(t,x) \cdot a(x) \, \psi'(t) \; \dd x \, \dd t  \\
			& =  \int_0^1 \int_{\R^d} \rho(t,x) v(t,x) \cdot a(x) \, \psi'(t) \;  \dd x \,  \dd t \\ 
			& =  \int_0^1 \int_{\R^d} [v,b](t,x) \cdot \rho(t,x)a(x) \psi(t)\, \dd x\, \dd t\\ 
			& \qquad -  \int_0^1 \int_{\R^d} \rho(t,x) b(x)\cdot \nabla (v(t,x)\cdot a(x)) \psi(t) \, \dd x\, \dd t .
		\end{split}
	\end{equation}
	On the other hand, for the term involving the Lie derivative, we first observe that a straightforward approximation argument yields that the conclusion of Lemma \ref{lemma:lie_bracket} still holds if $v \in \Wrm^{1,1}(\R^d)$. Therefore, we can use the formula relating the Lie derivative of $T_{w_t}$ (with respect to $b$) with the Lie bracket between $b$ and $w_t$. More precisely,
	\begin{equation}\label{eq:Lie}
		\begin{split}
			- & \int_0^1 \dprb{ \Lcal_b T_{w(t)}, \alpha} \, \psi(t) \;\dd t \\ 
			= & \int_0^1 \int_{\R^d} ([b(x), w(t,x)] - w(t,x) \dive b(x) )\cdot a(x)  \, \psi(t) \; \dd x \, \dd t \\ 
			= & \int_0^1 \int_{\R^d} [b(x), \rho(t,x) v(t,x)]\cdot a(x) \psi(t)\; \dd x \, \dd t \\ &
			\quad - \int_0^1 \int_{\R^d} \rho(t,x) \dive b(x) v(t,x) \cdot a(x)\psi(t) \; \dd x \, \dd t. 
		\end{split} 
	\end{equation}
	Let us compute separately the first integrand: since $\rho_t$ is smooth, $\rho_t v_t \in \Wrm^{1,1}(\R^d)$ for each $t$ and a direct application of the Leibniz rule yields 
	\[
	(\nabla(\rho v)(b))_i = \sum_j \partial_j (\rho v_i) b^j = \sum_j (\partial_j \rho) v_i b^j + \sum_j \rho (\partial_j v_i) b^j  = (\nabla \rho \cdot b) v_i + \rho (\nabla v (b))_i
	\]
	for every $i=1,\ldots, d$, and therefore 
	\begin{equation}\label{eq:integrand}
		\begin{split}
			[b, \rho v] = \nabla b(\rho v) - \nabla (\rho v) (b) 
			= \rho \nabla b(v) - \rho\nabla v (b) - (\nabla\rho \cdot b) v 
			= \rho [b,v] - (\nabla \rho \cdot b) v.
		\end{split}
	\end{equation}
	Plugging \eqref{eq:integrand} in \eqref{eq:Lie} and integrating by parts the third term, we obtain
	\begin{equation}\label{eq:Lie_final}
		\begin{split}
			- \int_0^1 \dprb{ \Lcal_b T_{w(t)}, \alpha} \, \psi(t) \;\dd t 
			& =  \int_0^1 \int_{\R^d} \rho(t,x)[b,v](t,x) \cdot a(x) \psi(t) \; \dd x \, \dd t \\
			& \quad  - \int_0^1 \int_{\R^d} (\nabla \rho(t,x) \cdot b(x)) v(t,x) \cdot a(x) \psi(t) \; \dd x \, \dd t  \\ 
			& \quad -  \int_0^1 \int_{\R^d} \rho(t,x) \dive b(x) v(t,x) \cdot a(x)\psi(t) \; \dd x \, \dd t \\ 
			= & \int_0^1 \int_{\R^d} \rho(t,x)[b,v](t,x) \cdot a(x) \psi(t) \; \dd x \, \dd t  \\ 
			& \quad \cancel{- \int_0^1 \int_{\R^d} (\nabla \rho(t,x) \cdot b(x)) v(t,x) \cdot a(x) \psi(t) \; \dd x \, \dd t } \\ 
			&\quad + \cancel{\int_0^1 \int_{\R^d}b(x) \cdot \nabla\rho(t,x) \, v(t,x) \cdot a(x) \psi(t) \; \dd x \, \dd t } \\ 
			& \quad +\int_0^1 \int_{\R^d} \rho(t,x) b(x) \cdot \nabla(v(t,x) \cdot a(x))  \psi(t) \; \dd x \, \dd t
			\\ 
			= & \int_0^1 \int_{\R^d} \rho(t,x)[b,v](t,x) \cdot a(x) \psi(t) \; \dd x \, \dd t  \\ 
			& \quad + \int_0^1 \int_{\R^d} \rho(t,x) b(x) \cdot \nabla(v(t,x) \cdot a(x)) \psi(t) \; \dd x \, \dd t. 
		\end{split} 
	\end{equation}
	Combining \eqref{eq:time} with \eqref{eq:Lie_final}, the terms involving the Lie Bracket cancel out because $[b,v]=-[v,b]$ and we obtain 
	\begin{align*}
		\int_0^1 \dprb{ T_{w(t)},\alpha} \, \psi'(t)  - \dprb{ \Lcal_b T_{w(t)}, \alpha} \, \psi(t) \;\dd t = 0. 
	\end{align*}
	By the usual density argument of forms of the kind $\alpha \psi$ (see, e.g., \cite[Lemma 3.3]{BDNR}) we have \eqref{eq:GTE_for_w} and this concludes the proof.
\end{proof}

As an immediate corollary, we obtain existence and uniqueness to \eqref{eq:vector_advection} in the case of smooth velocity fields.

\begin{corollary}\label{cor:esistenza_unicita_VAE} Let $b \in \Crm^\infty(\R^d;\R^d)$ be a smooth vector field and let $\overline{v} \in \Crm^\infty(\R^d;\R^d)$ be a smooth initial datum. Then there exists a unique solution to \eqref{eq:vector_advection}, and it is given by 
	\[
	v(t,x) = (\nabla X_{t} \cdot \overline{v})(X_{-t}(x)), 
	\]
	for every $(t,x) \in \R \times \R^d$. 
\end{corollary}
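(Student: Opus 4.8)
The plan is to establish existence and uniqueness separately. Existence will follow from a direct computation along the characteristics of $b$, checking that the stated formula solves \eqref{eq:vector_advection}; uniqueness will follow by pushing any solution over to the Geometric Transport Equation via Lemma~\ref{lemma:VAE_to_GTE} and invoking the uniqueness part of Theorem~\ref{thm:main_JFA}.

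For existence, set $v(t,x):=(\nabla X_t\cdot\overline{v})(X_{-t}(x))$. Since within this section $b$ is smooth, globally bounded and Lipschitz, the flow $X_t$ is smooth, each $X_t$ is a diffeomorphism of $\R^d$ with inverse $X_{-t}$, and hence $v$ is smooth; the integrability placing $v$ in $\Crm^0_t(\Wrm^{1,1}_x)$ is inherited from $\overline{v}$ via the substitution $y=X_{-t}(x)$ together with the bounds on the Jacobian of the flow recalled in Section~\ref{ss:flows}. To check the equation, introduce $V(t,y):=\nabla X_t(y)\,\overline{v}(y)$, so that $v(t,x)=V(t,X_{-t}(x))$, equivalently $V(t,y)=v(t,X_t(y))$. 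I would then differentiate $t\mapsto V(t,y)$ in two ways: by the chain rule, using $\dot X_t(y)=b(X_t(y))$,
\[
\frac{\dd}{\dd t}V(t,y)=\partial_t v(t,X_t(y))+\nabla v(t,X_t(y))\cdot b(X_t(y));
\]
and, interchanging $\partial_t$ with $\nabla_y$ (legitimate by smoothness) and using the identity $\partial_t\nabla X_t(y)=\nabla b(X_t(y))\,\nabla X_t(y)$ for the derivative of the flow,
\[
\frac{\dd}{\dd t}V(t,y)=\nabla b(X_t(y))\,\nabla X_t(y)\,\overline{v}(y)=\nabla b(X_t(y))\,v(t,X_t(y)).
\]
Equating the two expressions and using that $y\mapsto X_t(y)$ is onto $\R^d$ yields $\partial_t v_t+\nabla v_t\cdot b-\nabla b\cdot v_t=0$ at every point, i.e.\ $\partial_t v_t+[v_t,b]=0$; since $v$ is smooth this is in particular a weak solution, and $v_0=\overline{v}$ because $X_0=\Id$.

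For uniqueness, let $v^1,v^2\in\Crm^0_t(\Wrm^{1,1}_x)$ be two solutions of \eqref{eq:vector_advection} and set $w^i_t:=\rho_t v^i_t$. By Lemma~\ref{lemma:VAE_to_GTE} the paths $t\mapsto T_{w^i_t}$ consist of normal $1$-currents and solve \eqref{eq:GTE}; since $\rho$ is smooth and bounded above and below on $[0,1]\times\R^d$ (Section~\ref{ss:flows}) and $v^i\in\Crm^0_t(\Wrm^{1,1}_x)$, the masses $\Mbf(T_{w^i_t})$ and $\Mbf(\partial T_{w^i_t})$ stay bounded, so \eqref{eq:assumption_integrability_masses} holds. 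Moreover $w^i_0=\rho_0 v^i_0=\overline{v}$ because $\rho_0\equiv 1$, so both paths solve \eqref{eq:GTE} starting from $T_{\overline{v}}$. By the uniqueness statement of Theorem~\ref{thm:main_JFA}, $T_{w^1_t}=T_{w^2_t}$ for every $t$, i.e.\ $\rho_t v^1_t=\rho_t v^2_t$ $\L^d$-a.e.; dividing by $\rho_t>0$ gives $v^1_t=v^2_t$. Combining with the existence step, the unique solution is the stated formula; as a consistency check, Lemma~\ref{lemma:pushforward_currents} (or the change of variables formula) gives $\rho_t v_t\,\L^d=(X_t)_*(\overline{v}\,\L^d)$, matching the representation of Theorem~\ref{thm:main_JFA}.

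Being a corollary, no substantial obstacle is expected: the only points that deserve care are that the passage from $v_t$ to $w_t=\rho_t v_t$ genuinely lands in the class of normal currents with the integrability \eqref{eq:assumption_integrability_masses} required by Theorem~\ref{thm:main_JFA}, and that no divergence/boundary term is lost in translating between the two equations — exactly the delicate point flagged in the introduction, here harmless because $\rho$ is smooth — and, more bookkeeping than difficulty, the mild decay on $\overline{v}$ implicit in asking $v\in\Crm^0_t(\Wrm^{1,1}_x)$ in the first place.
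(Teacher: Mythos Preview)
Your proof is correct and follows essentially the same approach as the paper: existence by the direct characteristics computation (which the paper only references, while you carry it out explicitly), and uniqueness by passing to \eqref{eq:GTE} via Lemma~\ref{lemma:VAE_to_GTE} and invoking the uniqueness for the Geometric Transport Equation. The only cosmetic difference is that the paper also offers an alternative existence argument going the other way (from \eqref{eq:GTE} back to \eqref{eq:vector_advection} via the chain rule and the continuity equation for $\rho$), and cites \cite[Theorem~3.6]{BDNR} rather than Theorem~\ref{thm:main_JFA} for the uniqueness step.
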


\begin{proof} In order to show existence, one can compute that $v(t,x) = (\nabla X_{t} \cdot \overline{v})(X_{-t}(x))$ does indeed satisfy the equation via a straightforward (albeit tedious) computation -- see e.g. \cite{chorin2013mathematical}. 
	Alternatively, one can argue as follows. Notice that the function $w(t,x):=v(t,x) \rho(t,x)$ (with $v$ defined as in the statement) induces a family of normal (smooth) currents which can be written as $(X_{t})_* (T_{\overline{v}})$ by the very definition of push-forward. Building on the theory for the Geometric Transport Equation one thus has that 
	\begin{equation*}
		\begin{aligned}
			\frac{\dd}{\dd t} T_{w_t} + \Lcal_{b}(T_{w_t}) & = 0,  \qquad t \in (0,1)
		\end{aligned}
	\end{equation*} 
	which can be written in coordinates as 
	\begin{equation*}
		\begin{aligned}
			\frac{\dd}{\dd t}{(\rho_t v_t)} - [b,\rho_t v_t] + (\dive b ) \rho_t v_t  & = 0,  \qquad t \in (0,1).
		\end{aligned}
	\end{equation*} 
	A simple application of the chain rule, together with the fact that the density $\rho$ satisfies the continuity equation, yields that 
	\begin{align*}
		\frac{\dd}{\dd t}{ v_t } & = 	\frac{\dd}{\dd t}\left( \frac{1}{\rho_t}(\rho_t v_t) \right) \\ 
		& = -\frac{\partial_t \rho_t}{\rho_t^2} \rho_t v_t + \frac{1}{\rho_t} \left([b,\rho_t v_t] - (\dive b)  \rho_t v_t \right) \\
		& = - v_t \frac{\partial_t \rho_t}{\rho_t} + \frac{1}{\rho_t} \left(\rho_t [b, v_t] - \nabla \rho_t \cdot b \, v_t  - (\dive b)  \rho_t v_t \right) \\
		& = [b,v_t] - v_t \frac{\partial_t \rho_t + \dive(\rho_t b)}{\rho_t} \\
		& = [b,v_t], 
	\end{align*}
	from which the conclusion follows. (Here we have taken advantage of the smoothness assumptions to work with pointwise everywhere equalities, which clearly imply the validity of the equation in the definition of weak solution). 
	
	For the uniqueness, one can invoke again the theory of currents: if $v=v(t,x)$ is a non-trivial solution starting from initial datum $0$, then $T_t:=\rho_t v_t$ would be a non-trivial solution starting from the zero current to GTE, in view of Lemma \ref{lemma:VAE_to_GTE}. This contradicts \cite[Theorem 3.6]{BDNR}, and therefore the claim is proven. \end{proof}

An approximation argument allows one to infer the following conclusion in the case of Lipschitz velocity fields:

\begin{proposition}[Uniqueness and representation formula for \eqref{eq:vector_advection}]\label{prop:uniqueness_to_VAE_in_Lip}
	Let $b \colon \R^d \to \R^d$ be Lipschitz and bounded. Let $v\in \Crm^0([0,1]; \Wrm^{1,1}(\R^d))$ be a weak solution to \eqref{eq:vector_advection} driven by $b$ with initial datum $\overline{v} \in \Wrm^{1,1}(\R^d)$. Then the family of currents $t \mapsto T_t:=T_{\rho_t v_t}$ is normal and solves \eqref{eq:GTE}. In particular, it holds  
	\[
	v(t,x) = (\nabla X_{t} \cdot \overline{v})(X_{-t}(x)), \qquad \text{ for every $t \in \R$ and $\L^d$-a.e. $x \in \R^d$.}
	\]
\end{proposition}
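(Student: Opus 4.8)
The plan is to reduce to the smooth case by mollifying $b$, and then to pass to the limit, using the \emph{inhomogeneous} Geometric Transport Equation of Theorem~\ref{thm:duhamel} to absorb the arising commutator error. Set $w_t := \rho_t v_t$ and $T_t := T_{w_t}$; I will show that
\[
T_t = (X_t)_* T_{\bar v} \qquad \text{for every } t \in [0,1],
\]
after which both assertions follow at once: $(T_t)_t$ is a family of normal currents solving \eqref{eq:GTE} by Theorem~\ref{thm:main_JFA} applied to the normal current $T_{\bar v}$ (note $\partial T_{\bar v} = -(\dive \bar v)\L^d$ has finite mass, since $\bar v \in \Wrm^{1,1}$), and the representation formula follows from Lemma~\ref{lemma:pushforward_currents}: writing $(X_t)_* T_{\bar v} = \tilde\tau\,\tilde\mu$ with $\tilde\mu = (X_t)_\#\L^d = \rho_t\L^d$ and $\tilde\tau(y) = D_{T_{\bar v}} X_t(x)[\bar v(x)]$ for $y = X_t(x)$, one has $D_{T_{\bar v}}X_t(x) = \nabla X_t(x)$ for $\L^d$-a.e.\ $x$ (because $V(\L^d,x) = \R^d$ a.e.\ and by Rademacher), so $(X_t)_* T_{\bar v} = \bigl[(\nabla X_t\cdot\bar v)\circ X_{-t}\bigr]\rho_t\L^d$; comparing with $T_t = \rho_t v_t\L^d$ and dividing by $\rho_t>0$ gives $v(t,x) = (\nabla X_t\cdot\bar v)(X_{-t}(x))$ for a.e.\ $x$.

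First I would mollify: let $b^\e := b * \eta_\e \in \Crm^\infty$, so that $\|b^\e\|_\infty \le \|b\|_\infty$, $\Lip(b^\e)\le\Lip(b)$, $\|\dive b^\e\|_\infty \le \|\dive b\|_\infty$ and $\|b^\e - b\|_\infty \le \Lip(b)\,\e$; denote by $X^\e$ and $\rho^\e$ the (smooth) flow and density of $b^\e$. Since $v$ solves \eqref{eq:vector_advection} for $b$, it solves in the weak sense $\tfrac{\dd}{\dd t} v_t + [v_t, b^\e] = E^\e_t$ with $E^\e_t := [v_t, b^\e - b] = \nabla v_t\cdot(b^\e - b) - \nabla(b^\e - b)\cdot v_t$, and a routine DiPerna--Lions-type argument gives $\int_0^1 \|E^\e_t\|_{\Lrm^1(\R^d)}\,\dd t \to 0$ as $\e\to 0$: the first summand is bounded by $\Lip(b)\,\e\,\sup_t\|\nabla v_t\|_{\Lrm^1}$, and the second tends to $0$ for each fixed $t$ by dominated convergence (as $\nabla b^\e\to\nabla b$ a.e.\ with $|\nabla(b^\e-b)|\le 2\|\nabla b\|_\infty$ and $v_t\in\Lrm^1$), uniformly enough in $t$ to conclude by a further dominated convergence in $t$.

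Next I would rerun the computation of Lemma~\ref{lemma:VAE_to_GTE} for the \emph{smooth} field $b^\e$, now keeping the source term $E^\e$ on the right-hand side of \eqref{eq:weak_VAE}: testing against $\Psi = \rho^\e\psi a$ as there, the only change is the extra contribution $-\int_0^1\!\int E^\e_t\cdot\rho^\e\psi a$, and one obtains that $t\mapsto T_{w^\e_t}$, with $w^\e_t := \rho^\e_t v_t \in \Wrm^{1,1}$, is a path of normal currents with $\int_0^1\Mbf(T_{w^\e_t}) + \Mbf(\partial T_{w^\e_t})\,\dd t < \infty$, solving, with initial datum $T_{w^\e_0} = T_{\bar v}$,
\[
\tfrac{\dd}{\dd t} T_{w^\e_t} + \Lcal_{b^\e} T_{w^\e_t} = R^\e_t, \qquad R^\e_t := -T_{\rho^\e_t E^\e_t}, \qquad \int_0^1 \Mbf(R^\e_t)\,\dd t \le \|\rho^\e\|_\infty\!\int_0^1\|E^\e_t\|_{\Lrm^1}\,\dd t \to 0 .
\]
By the Duhamel representation of Theorem~\ref{thm:duhamel} (applicable since $b^\e$ is smooth and $T_{\bar v}\in\Mrm_1(\R^d)$) together with \eqref{eq:mass_of_pushforward},
\[
T_{w^\e_t} = (X^\e_t)_* T_{\bar v} + S^\e_t, \qquad \sup_{t\in[0,1]} \Mbf(S^\e_t) \le e^{\Lip(b)}\!\int_0^1\Mbf(R^\e_s)\,\dd s \to 0 .
\]

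Finally I would fix $t\in[0,1]$ and let $\e\to 0$ in this identity. On the left, $\rho^\e_t\weaksto\rho_t$ weakly$^*$ in $\Lrm^\infty$ (from $(X^\e_t)_\#\L^d\weaksto(X_t)_\#\L^d$, a consequence of the locally uniform convergence $X^\e_t\to X_t$, together with the uniform bounds $e^{-\|\dive b\|_\infty t}\le\rho^\e_t\le e^{\|\dive b\|_\infty t}$), hence $T_{w^\e_t}\weaksto T_{w_t}$; on the right, $S^\e_t\to 0$ in mass, while $(X^\e_t)_* T_{\bar v}\weaksto(X_t)_* T_{\bar v}$, because $X^\e_t\to X_t$ uniformly and $\nabla X^\e_t\weaksto\nabla X_t$ weakly$^*$ in $\Lrm^\infty$ (the latter since $\nabla X^\e_t$ is bounded in $\Lrm^\infty$ and converges to $\nabla X_t$ in $\Dscr'$), so that in the pairing $\dprb{(X^\e_t)_* T_{\bar v},\omega} = \dprb{T_{\bar v},(X^\e_t)^*\omega}$ only the factor $\nabla X^\e_t$ converges weakly while all the remaining ones converge strongly. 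Uniqueness of weak$^*$ limits then yields $T_t = (X_t)_* T_{\bar v}$, and the proof is complete. The delicate point is precisely this last passage, and it is the reason for detouring through the inhomogeneous equation and the Duhamel formula: a direct limit in the weak form of \eqref{eq:GTE} for $b^\e$ would require passing to the limit in $-\dprb{T_{w^\e_t},\Lcal_{b^\e}\omega}$, i.e.\ in a product of two sequences ($w^\e_t$ and $\Lcal_{b^\e}\omega$, the latter built from $\nabla b^\e$) that converge only weakly$^*$ — which is not legitimate; rewriting $T_{w^\e_t}$ as a pushforward plus a mass-vanishing error circumvents the issue.
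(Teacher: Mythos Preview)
Your approach is essentially identical to the paper's: mollify $b$ to $b^\eps$, show that $T_{\rho^\eps_t v_t}$ solves an inhomogeneous \eqref{eq:GTE} driven by $b^\eps$ with remainder $R^\eps_t$ of vanishing mass, apply the Duhamel formula of Theorem~\ref{thm:duhamel}, and pass to the limit using $\rho^\eps_t \weaksto \rho_t$ and $(X^\eps_t)_* T_{\bar v} \weaksto (X_t)_* T_{\bar v}$. One harmless slip: tracing the signs through the computation of Lemma~\ref{lemma:VAE_to_GTE} gives $R^\eps_t = +T_{\rho^\eps_t E^\eps_t}$ (matching the paper's $R^\eps_t = \rho^\eps_t[v_t, b^\eps - b]\,\L^d$), not $-T_{\rho^\eps_t E^\eps_t}$; this does not affect the argument since only $\Mbf(R^\eps_t)\to 0$ is used.
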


\begin{proof}		
	Consider a family of smooth mollifiers (in space) $\{\sigma^\eps\}_{\eps>0} \subseteq \Crm^{\infty}_c(\R^d)$ and set $b^\eps := b \ast \sigma^\eps$. Let $\rho^\eps$ be the smooth, uniformly positive density of $b^\eps$.
	The vector fields $w^\eps_t:=\rho_t^\eps v_t$ are still $\Crm^0_t\Wrm_x^{1,1}$ and the currents $T_{w_t^\eps}$ are normal for every $t \in [0,1]$. 
	We now claim that the currents $T_{w_t^\eps}$ solve 
	\begin{equation}\label{eq:nonhomogenous}
		\left\{\begin{aligned}
			\frac{\dd}{\dd t} T_{w^\eps_t} + \Lcal_{b^\eps}(T_{w^\eps_t}) & = R_t^\eps,  \qquad t \in \R,\\
			T_{w_0^\eps} &=T_{\overline{v}}, 
		\end{aligned}\right.
	\end{equation}
	where the reminder $R^\eps_t$ is the 1-current defined by $R^\eps_t:=( \rho_t^\eps[v_t,b^\eps-b])\L^d$ for every $\eps >0$.
	In order to show \eqref{eq:nonhomogenous}, it is enough to repeat similar computations to those in the proof of Lemma \ref{lemma:VAE_to_GTE}. For the sake of completeness we outline here the full argument. 
	We denote again by $\alpha =\sum_i \alpha_i dx^i$ a smooth, compactly supported 1-form and by $a := \sum_i \alpha_i \ee^i \in \Crm^\infty_c(\R^d;\R^d)$ the associated 1-vector field and we pick a test function $\psi \in \Crm^\infty_c((0,1))$.  
	On the one hand, for the term involving the time derivative nothing changes, and one has 
	\begin{equation*}
		\begin{split}
			\int_0^1 \dprb{ T_{w^\eps_t},\alpha} \, \psi'(t) \;\dd t  & =  \int_0^1 \int_{\R^d} w^\eps(t,x) \cdot a(x) \, \psi'(t) \; \dd x \, \dd t  \\
			& =  \int_0^1 \int_{\R^d} \rho^\eps(t,x) v(t,x) \cdot a(x) \, \psi'(t) \;  \dd x \,  \dd t \\ 
			& =  \int_0^1 \int_{\R^d} [v,b](t,x) \cdot \rho^\eps(t,x)a(x) \psi(t)\, \dd x\, \dd t\\ 
			& \qquad -  \int_0^1 \int_{\R^d} \rho^\eps(t,x) b^\eps(x)\cdot \nabla (v(t,x)\cdot a(x)) \psi(t) \, \dd x\, \dd t , 
		\end{split}
	\end{equation*}
	where we are using that $\rho^\eps$ is the solution to the continuity equation driven by $b^\eps$ and that $v$ is advected by $b$. 
	For the term involving the Lie derivative, we have instead 
	\begin{equation*}
		\begin{split}
			- \int_0^1 \dprb{ \Lcal_{b^\eps} T_{w^\eps_t}, \alpha} \, \psi(t) \;\dd t 
			= & \int_0^1 \int_{\R^d} ([b^\eps(x), w^\eps(t,x)] - w^\eps(t,x) \dive b^\eps(x) )\cdot a(x)  \, \psi(t) \; \dd x \, \dd t \\ 
			= & \int_0^1 \int_{\R^d} [b^\eps (x), \rho^\eps(t,x) v(t,x)]\cdot a(x) \psi(t)\; \dd x \, \dd t \\ 
			& \quad - \int_0^1 \int_{\R^d} \rho^\eps (t,x) \dive b^\eps(x) v(t,x) \cdot a(x)\psi(t) \; \dd x \, \dd t. 
		\end{split} 
	\end{equation*}
	Using the (pointwise a.e.) identity $[b^\eps, \rho_t^\eps v_t]= \rho_t^\eps [b^\eps,v_t] - (\nabla \rho_t^\eps \cdot b^\eps) v_t$, we obtain 
	\begin{equation*}
		\begin{split}
			- \int_0^1 \dprb{ \Lcal_{b^\eps} T_{w^\eps_t}, \alpha} \, \psi(t) \;\dd t
			& =  \int_0^1 \int_{\R^d} \rho^\eps(t,x) [b^\eps,v](t,x) \cdot a(x) \psi(t)\; \dd x \, \dd t \\ 
			& \quad -  \int_0^1 \int_{\R^d} (\nabla \rho^\eps(t,x) \cdot b^\eps(x)) v(t,x) \cdot a(x) \psi(t)\; \dd x \, \dd t  \\ 
			& \quad  - \int_0^1 \int_{\R^d} \rho^\eps (t,x) \dive b^\eps(x) v(t,x) \cdot a(x)\psi(t) \; \dd x \, \dd t. \\ 
			&= \int_0^1 \int_{\R^d} \rho^\eps(t,x) [b^\eps,v](t,x) \cdot a(x) \psi(t)\; \dd x \, \dd t \\ 
			& \quad - \cancel{ \int_0^1 \int_{\R^d} (\nabla \rho^\eps(t,x) \cdot b^\eps(x)) v(t,x) \cdot a(x) \psi(t)\; \dd x \, \dd t}  \\ 
			& \quad +\cancel{\int_0^1 \int_{\R^d} (\nabla \rho^\eps(t,x) \cdot b^\eps(x)) v(t,x) \cdot a(x) \psi(t)\; \dd x \, \dd t}  \\ 
			&\quad  + \int_0^1 \int_{\R^d} \rho^\eps(t,x) b^\eps(x) \nabla (v_t(x) \cdot a(x)) \psi(t)\; \dd x \, \dd t. 
		\end{split} 
	\end{equation*}
	All in all,
	\begin{align*}
		\int_0^1 \dprb{ T_{w^\eps_t},\alpha} \, \psi'(t) - \dprb{ \Lcal_{b^\eps} T_{w^\eps_t}, \alpha} \, \psi(t) \;\dd t & =  \int_0^1 \int_{\R^d} \rho^\eps(t,x) [v,b](t,x) \cdot a(x) \psi(t)\, \dd x\, \dd t\\ 
		& \quad - \cancel{\int_0^1 \int_{\R^d} \rho^\eps(t,x) b^\eps(x)\cdot \nabla (v(t,x)\cdot a(x)) \psi(t) \, \dd x\, \dd t} \\ 
		&  + \int_0^1 \int_{\R^d} \rho^\eps(t,x) [b^\eps,v](t,x) \cdot a(x) \psi(t)\; \dd x \, \dd t \\ 
		& \quad  + \cancel{\int_0^1 \int_{\R^d} \rho^\eps(t,x) b^\eps(x)\cdot \nabla (v(t,x)\cdot a(x)) \psi(t) \, \dd x\, \dd t} \\ 
		& =  \int_0^1 \int_{\R^d} \rho^\eps(t,x) a(x) \cdot [v,b-b^\eps](t,x) \psi(t)\; \dd x \, \dd t \\
		& = - \int_0^1 \dpr{R^\eps_t, \alpha} \psi(t) \, \dd t.  
	\end{align*}
	and this gives the desired conclusion. 
	Observe that 
	\[
	\int_0^1 \Mbf(R^\eps_t) \, \dd t = \int_0^1 \int_{\R^d} \rho^\eps(t,x) \big \vert [v,b-b^\eps](t,x) \big\vert \, \dd x \,\dd t  < \infty 
	\]
	and actually for every fixed $t$   
	\begin{align*}
		\Mbf(R^\eps_t) & = \int_{\R^d} \rho^\eps(t,x) \big \vert [v,b-b^\eps](t,x) \big\vert \, \dd x \\ 
		&  \le C(\Lip b) \left[  \| v_t \|_{\Wrm^{1,1}} \|b-b^\eps\|_{\infty}  + \int_{\R^d} |v(t,x)| |\nabla b(x)-\nabla b^\eps(x)| \, \dd x \right] 
	\end{align*}
	and both summands converge to $0$ as $\eps \to 0$ (the former because $b^\eps \to b$ strongly, the latter by Lebesgue Dominated Convergence Theorem, since $\nabla b^\eps(x) \to \nabla b(x)$ at a.e. $x$ by Rademacher's Theorem).
	
	Invoking the Duhamel-type formula of Theorem \ref{thm:duhamel} we obtain that for every $\eps>0$ it holds 
	\[
	T_{w^\eps_t } = (X^\eps_t)_{*} (T_{\overline{v}}) + \int_0^t (X^\eps_{t-s})_{*}(R_s^\eps)\, \dd s. 
	\]
	Observe that the following estimate holds: 
	\[
	\Mbf\left(  \int_0^t (X^\eps_{t-s})_{*}(R_s^\eps)\, \dd s \right)\le \int_0^t \Lip(X^\eps_{t-s}) \Mbf(R^\eps_s) \, \dd s \to 0 
	\]
	as $\eps \to 0$. For the other summand, notice that, since $X^\eps_t \to X_t$ uniformly (on compact sets), we have $(X^\eps_t)_{*} (T_{\overline{v}}) \weaksto (X_t)_{*} (T_{\overline{v}})$. In conclusion, 
	\[
	T_{w^\eps_t} \weaksto  (X_t)_{*} (T_{\overline{v}}) 
	\] 
	as $\eps \to 0$ weakly-star in the sense of currents. On the other hand, since $b$ is Lipschitz, we have $\rho_t^\eps \weaksto \rho_t$ for each $t$ and therefore for every $\Phi \in \Crm_c^\infty(\R^d;\R^d)$ (and fixed $t$) we have 
	\[
	\int_{\R^d} \rho^\eps(t,x) v(t,x) \cdot \Phi(x) \, \dd x  \to \int_{\R^d} \rho(t,x) v(t,x) \cdot \Phi(x) \, \dd x 
	\]
	because $v_t \cdot \Phi \in \Lrm^1(\R^d)$ for every $t$, which means 
	\[
	T_{w_t^\eps} \weaksto T_{w_t}, \qquad w_{t} := \rho_t v_t 
	\]
	weakly-star in the sense of currents for every $t \in [0,1]$. 
	By uniqueness of the limit, we conclude that 
	\[
	T_{w_t} =  (X_t)_{*} (T_{\overline{v}})
	\]
	for every $t$, i.e.
	\[
	\rho(t,\frarg) v(t,\frarg) \L^d = (\nabla X_{t} \cdot \overline{v})(X_{-t}(\frarg)) \rho(t,\frarg) \L^d. 
	\]
	Since $\rho \in [c,C]$ for suitable constants $c,C>0$, we have that for every $t \in [0,1]$, it holds  
	\begin{equation*}
		v_t(\frarg) = (\nabla X_t \cdot \overline{v})(X_{-t}(\frarg))  \qquad \text{$\L^d$-almost everywhere on $\R^d$}, 
	\end{equation*}
	which concludes the proof. 
\end{proof}

\section{Frobenius' Theorem}
We are now able to prove the following central result: 
\begin{theorem}[Generalised Frobenius' Theorem] \label{thm:big_one} Let $b \colon \R^d \to \R^d$ be Lipschitz and bounded. Let $v  \colon \R^d \to \R^d$ be a vector field such that: 
	\begin{enumerate}
		\item $v \in \Lrm^\infty(\R^d;\R^d)$ with bounded divergence $\dive v \in \Lrm^\infty(\R^d)$; 
		\item $v$ has exactly one Regular Lagrangian Flow, $Y=Y(s,y)$ defined on $[0,1] \times \R^d$; 
		\item the vector field $w(t,x) := \rho(t,x)v(x)$ induces a path of normal currents $t \mapsto T_t := T_{w_t}$ which are transported by $b$.  
	\end{enumerate}
	Then the flows of $b$ and $v$ commute, i.e. 
	\[
	X_t \circ Y_s = Y_s \circ X_t \qquad \text{for every $t,s \in [0,1]$, \, a.e. on $\R^d$}.
	\]
\end{theorem}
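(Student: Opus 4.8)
The plan is to first turn hypothesis (3) into a pointwise representation formula for $v$ along the flow of $b$, and then to recognise a suitable conjugate of the flow $Y$ as a regular Lagrangian flow of $v$, so that the uniqueness in hypothesis (2) forces the two flows to commute. Throughout, $t\in[0,1]$ is fixed.

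First I would extract the representation formula. Since $\rho(0,\cdot)\equiv 1$ we have $w_0=v$, hence $T_0=T_v$ (the initial datum being attained in the weak$^*$ sense, which is legitimate because solutions of \eqref{eq:GTE} may be taken weak$^*$-continuous), and Theorem~\ref{thm:main_JFA} then identifies $T_t=(X_t)_\ast T_v$. Now $X_t$ is a bi-Lipschitz homeomorphism of $\R^d$, hence proper and injective, and $T_v=(v/|v|)\,|v|\L^d$ is a normal $1$-current; Lemma~\ref{lemma:pushforward_currents} therefore gives the explicit form of $(X_t)_\ast T_v$, whose mass measure equals $|v\circ X_{-t}|\,\rho_t\,\L^d$ by the change of variables formula and whose orientation at $X_t(x)$ is $D_{T_v}X_t(x)[v(x)/|v(x)|]$, the latter being defined for $\L^d$-a.e.\ $x$ since $\spn(v)\subseteq V(\|T_v\|,\cdot)$ by \eqref{eq:span_in_bundle}. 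Comparing with $T_t=\rho_t v\,\L^d$ and cancelling the strictly positive factor $\rho_t$ yields the representation formula
\[
D X_t(x)[v(x)] = v\bigl(X_t(x)\bigr) \qquad\text{for $\L^d$-a.e.\ }x\in\R^d, \qquad (\star)
\]
valid for every fixed $t\in[0,1]$. (Note that no Sobolev regularity of $v$ is used here: $(\star)$ is obtained directly from the current formalism.)

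Next I would show that $Z(s,z):=X_t\bigl(Y_s(X_{-t}(z))\bigr)$ is a regular Lagrangian flow of $v$ on $[0,1]\times\R^d$. One has $Z(0,\cdot)=\Id$, and the compressibility bound $(Z(s,\cdot))_\#\L^d\le C'\L^d$ follows immediately from the RLF bound for $Y$ and the two-sided bounds on the densities of $X_{\pm t}$. The essential point is that, for $\L^d$-a.e.\ $z$, the curve $s\mapsto Z(s,z)$ --- absolutely continuous by Lemma~\ref{lemma:AC_curves_composition} --- solves $\dot Z=v(Z)$. Writing $z=X_t(x)$ and $\gamma(s):=Y_s(x)$, so that $Z(\cdot,z)=X_t\circ\gamma$, I would invoke the measure-theoretic fact that every $\L^d$-negligible set $N$ is negligible along $Y$, i.e.\ $\{(s,x):Y_s(x)\in N\}$ is $(\L^1\otimes\L^d)$-null, which is clear from $\int_0^1 (Y_s)_\#\L^d(N)\,\dd s\le C\int_0^1\L^d(N)\,\dd s=0$. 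Choosing $N$ to be the null set where $X_t$ is not differentiable in the direction $v$ or where $(\star)$ fails, and combining this with Fubini, with the a.e.\ identity $\dot\gamma=v\circ\gamma$, and with a Lebesgue-point argument, one obtains, for a.e.\ $x$ and a.e.\ $s$, the expansion $\gamma(s+h)=\gamma(s)+h\,v(\gamma(s))+o(h)$; the Lipschitz continuity of $X_t$ then absorbs the $o(h)$ error, and the differentiability of $X_t$ at $\gamma(s)$ along $\spn(v(\gamma(s)))$ together with $(\star)$ give $(X_t\circ\gamma)'(s)=D X_t(\gamma(s))[v(\gamma(s))]=v\bigl((X_t\circ\gamma)(s)\bigr)$. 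This chain-rule-along-an-RLF-trajectory step is the one I expect to be the main obstacle: it is exactly where the missing Sobolev or $\BV$ regularity of $v$ is traded for the decomposability-bundle differentiability of the Lipschitz map $X_t$ and for the negligibility of bad sets along $v$-trajectories. Granting this, $Z$ is a regular Lagrangian flow of $v$, so by hypothesis (2) it coincides with $Y$, i.e.\ $X_t(Y_s(X_{-t}(z)))=Y_s(z)$ for a.e.\ $z$ and every $s$. Substituting $z=X_t(x)$ (a null-set-preserving change of variable) gives $X_t\circ Y_s=Y_s\circ X_t$ a.e.\ on $\R^d$ for all $s,t\in[0,1]$, as claimed.
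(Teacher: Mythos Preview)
Your proposal is correct and follows essentially the same strategy as the paper: extract the pointwise identity $(\star)$ from Theorem~\ref{thm:main_JFA} and Lemma~\ref{lemma:pushforward_currents}, then verify that a conjugate of $Y$ is an RLF of $v$ via the directional chain-rule computation along $Y$-trajectories (with exactly the Lebesgue-point and Fubini null-set arguments you describe), and conclude by uniqueness of the RLF. The only cosmetic difference is that you conjugate as $Z=X_t\circ Y_s\circ X_{-t}$ while the paper takes $Z=X_{-t}\circ Y_s\circ X_t$; your choice feeds $(\star)$ for $+t$ directly into the chain rule, whereas the paper's order implicitly requires the companion identity for $-t$, which follows from $(\star)$ by the a.e.\ chain rule for the bi-Lipschitz inverse.
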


\begin{proof}
	We split the proof in several steps. 
	\vspace{.5em}
	
	{\bfseries \emph{Step 1. Invariance via \eqref{eq:GTE}}}. Assumption (3) means that the (continuous-in-time) family $t\mapsto T_{w_t}$ solves the initial value problem (recall that $\rho_0 \equiv 1$) 
	\[
	\left\{\begin{aligned}
		\frac{\dd}{\dd t}T_t+\Lcal_b T_t &=0,  \qquad t \in (0,1),\\
		T_0 &=T_v.
	\end{aligned}\right.
	\] 
	By Theorem \ref{thm:main_JFA}, we deduce that 
	\[
	T_{w_t} = (X_{t})_{*} T_v, 
	\] 
	for every $t$. In view of Lemma \ref{lemma:pushforward_currents} this readily implies 
	\[
	(\nabla X_t \cdot v)(X_{-t}(\frarg)) \rho(t,\frarg) \L^d = \rho(t,\frarg) v(\frarg) \L^d. 
	\]
	We have already observed that $\rho \in [c,C]$ for suitable constants $c,C>0$, therefore, for every $t \in [0,1]$, it holds  
	\begin{equation}\label{eq:stellina}
		(\nabla X_t \cdot v)(X_{-t}(\frarg))  =  v(\frarg) \qquad \text{ a.e. on $\R^d$.} 
	\end{equation}
	\vspace{.5em}
	
	{\bfseries \emph{Step 2. Images of integral curves.}} We now show that \eqref{eq:stellina} implies that, for every $t \in [0,1]$ and for a.e. $y \in \R^d$, the function 
	\[
	s \mapsto X_{t}(Y_s(y))
	\]
	is differentiable for $\L^1$-a.e. $s$ and it holds 
	\begin{equation}\label{eq:derivata_di_composizione_cruciale}
		\frac{\dd}{\dd s} X_{t}(Y_s(y)) = v(X_{t}(Y_s(y))). 
	\end{equation}
	Indeed, we have by definition 
	\begin{align*}
		\frac{\dd}{\dd s} (X_t(Y_s(y)) & = \lim_h \frac{X_t\left(Y_{s+h}(y))-X_t(Y_s(y)\right)}{h} \\ 
		& = \lim_h \frac{X_t\left(Y_{s}(y) + \int_s^{s+h} v(Y_r(y)) \dd r\right)- X_t(Y_s(y)+hv(Y_s(y)))}{h}  \\
		& \qquad + \lim_h \frac{ X_t(Y_s(y)+hv(Y_s(y)))- X_t(Y_s(y))}{h} \\ 
		& =: \ell + (DX_t \cdot v)(Y_s(y)), 
	\end{align*}
	provided that $\ell$ exists. Let us show that this is the case and that indeed $\ell=0$. We have 
	\begin{align*}
		& \frac{1}{h}\left|X_t\left(Y_{s}(y) + \int_s^{s+h} v(Y_r(y)) \dd r \right) - X_t(Y_s(y)+hv(Y_s(y)))\right| \\ 
		\le & \Lip(X_t) \left| \fint_s^{s+h} v(Y_r(y))\, \dd  r - v(Y_s(y))\right| 
	\end{align*}
	and the right-hand side goes to $0$ as $h \to 0$ at every Lebesgue point of $r \mapsto v(Y_r(y))$, and thus for a.e. $s$. 
	We have therefore obtained 
	\[
	\frac{\dd}{\dd s} (X_t(Y_s(y)) = (DX_t \cdot v)(Y_s(y)), 
	\]
	hence, invoking \eqref{eq:stellina}, the desired \eqref{eq:derivata_di_composizione_cruciale} follows
	for a.e. $y \in \R^d$ and for a.e. $s \in [0,1]$. Observe that we have here used the following standard Fubini-like argument: Let 
	$N_t:=\{x \in \R^d : \text{\eqref{eq:stellina} is not satisfied}\}$.   
	By assumption 
	\[
	\L^d(N_t) = 0 \implies ((Y_{s})_{\#}\L^d)(N_t) = 0 \implies  \L^d (Y_{-s}(N_t))=0 
	\]
	for every $s$, i.e. $\L^d (\{z : z = Y_{-s}(y), \text{ for some $y \in N_t$}\})=0$ i.e. $\L^d (\{z : Y(s,z)\in N_t \})=0$
	and thus 
	\[
	\int_0^1 \L^d (\{z: Y(s,z) \in N_t\})\, \dd s =0 .
	\]
	Fubini's Theorem now gives 
	\[
	\int_{\R^d} \L^1 (\{s \in [0,1]: Y(s,z) \in N_t\}) \dd z =0 
	\]
	whence  
	\[
	\L^1 (\{s \in [0,1]: Y(s,z) \in N_t\}) = 0
	\]
	for a.e. $z$. This ensures that the equality \eqref{eq:derivata_di_composizione_cruciale} is satisfied for $\L^1$-a.e. $s$.
	\vspace{.5em}
	
	{\bfseries \emph{Step 3. Regular Lagrangian Flow of $v$.}} Fix $t \in [0,1]$. For $s \in [0,1]$ consider the following maps defined on $\R^d$: 
	\[
	Z_s := X_{-t} \circ Y_s \circ X_t
	\]
	We claim that $Z=Z(s,y)$ is a Regular Lagrangian Flow of $v$. 
	\vspace{.5em}
	
	{\bfseries \emph{Step 3.1. Non-concentration of the image measure.}} Recall that $(X_{t})_{\#} \L^d = \rho_t \L^d$ and that $(Y_{s})_{\#} \L^d \le C \L^d$ for some $C>0$ for every $s$. Fix a continuous, non-negative function $\phi \in \Crm^0(\R^d)$. We have 
	\begin{align*}
		\int_{\R^d} \phi \, \dd ((Z_{s})_{\#} \L^d) & = \int_{\R^d} \phi(X_{t} \circ Y_{-s} \circ X_{-t})\,  \dd \L^d \\ 
		& = \int_{\R^d} \phi(X_{t} \circ Y_{-s})\, \rho_t \dd \L^d \\ 
		& \le C  \int_{\R^d} \phi(X_{t})\, \rho_t \dd \L^d \\ 
		& =  C  \int_{\R^d} \phi\,  \dd \L^d 
	\end{align*}
	which gives $(Z_{s})_{\#} \L^d \le C \L^d$, as we wanted.
	\vspace{.5em}
	
	{\bfseries \emph{ Step 3.2. Integral curves of $v$.}} Let us fix $a,b \in [0,1]$ with $a\le b$ and $x \in \R^d$ and set $y:=X_t(x)$.  Then 
	\[
	Z(b,x) - Z(a,x) = X_{-t}(Y_b(X_t(x))) - X_{-t}(Y_a(X_t(x))) = X_{-t}(Y_b(y)) - X_{-t}(Y_a(y)). 
	\]
	Since $s \mapsto Y_{s}(y)$ is absolutely continuous, by Lemma \ref{lemma:AC_curves_composition}, the curve $s \mapsto X_{-t}(Y_s(y))$ is still absolutely continuous, therefore
	\begin{align*}
		X_{-t}(Y_b(y)) - X_{-t}(Y_a(y)) & = \int_a^b \frac{\dd}{\dd s} X_{-t}(Y_s(y)) \, \dd s \\
		&  = \int_a^b v(X_{-t}(Y_{s}(y))) \, \dd s \\
		&  =  \int_a^b v(X_{-t}(Y_{s}(X_{t}(x)))) \, \dd s \\ 
		& = \int_a^b v(Z_s(x)) \, \dd s, 
	\end{align*}
	finally yielding
	\[
	Z(b,x) - Z(a,x) = \int_a^b v(Z_s(x)) \, \dd s.
	\]
	The conclusion now follows because, by uniqueness of the Regular Lagrangian Flow, we must have 
	\[
	Z(s,y) = Y(s,y) \qquad \text{ for every $s$ and $\L^{d}$-a.e $y$},  
	\]
	whence the commutativity. 
\end{proof}

\begin{corollary}[Classical Frobenius' Theorem]\label{cor:classical_frobenius} Let $b \colon \R^d \to \R^d$ be Lipschitz and bounded. Let $v \in \Wrm^{1,1}(\R^d;\R^d)$ have bounded divergence. If $[b,v]=0$ a.e. in $\R^d$, then the flows of $b$ and $v$ commute, i.e. 
	\[
	X_t \circ Y_s = Y_s \circ X_t \qquad \text{for every $t,s \in [0,1]$, \, a.e. on $\R^d$}.
	\]
\end{corollary}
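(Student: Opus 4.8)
The plan is to deduce Corollary~\ref{cor:classical_frobenius} directly from Theorem~\ref{thm:big_one}, so the task is merely to check its three hypotheses. Hypothesis~(1) holds by assumption (together with the boundedness/growth on $v$ needed to place it in the DiPerna--Lions--Ambrosio framework), and hypothesis~(2) --- the uniqueness of the regular Lagrangian flow $Y$ of $v$ --- is then guaranteed by that same theory, since $v$ is a Sobolev vector field with bounded divergence. Thus everything reduces to hypothesis~(3): the field $w(t,x):=\rho(t,x)v(x)$ must induce a path of normal $1$-currents $t\mapsto T_{w_t}$ that is transported by $b$, i.e.\ that solves \eqref{eq:GTE}.

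To obtain~(3), the key remark is that the \emph{constant-in-time} family $v_t:=v$ is a weak solution of \eqref{eq:vector_advection} driven by $b$ with initial datum $\overline v=v$. Indeed $v\in\Crm^0([0,1];\Wrm^{1,1}(\R^d))$ trivially, and in the weak formulation \eqref{eq:weak_VAE} the first term $\int_0^1\int_{\R^d} v\cdot\partial_t\Psi\,\dd x\,\dd t=\int_{\R^d} v\cdot\bigl(\Psi(1,\frarg)-\Psi(0,\frarg)\bigr)\dd x$ vanishes because $\Psi$ is compactly supported in $(0,1)\times\R^d$, while the second term $\int_0^1\int_{\R^d}[v_t,b]\cdot\Psi\,\dd x\,\dd t$ vanishes because $[v,b]=-[b,v]=0$ for $\L^d$-a.e.\ $x$. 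Proposition~\ref{prop:uniqueness_to_VAE_in_Lip} then applies verbatim to this family and yields that $t\mapsto T_{\rho_t v_t}=T_{\rho_t v}=T_{w_t}$ is a path of normal $1$-currents solving \eqref{eq:GTE}, which is precisely hypothesis~(3) (note $\rho_0\equiv1$, so the current starts at $T_v$, as used in the proof of Theorem~\ref{thm:big_one}).

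Once all three hypotheses are in place, Theorem~\ref{thm:big_one} gives $X_t\circ Y_s=Y_s\circ X_t$ for all $s,t\in[0,1]$ and $\L^d$-a.e.\ on $\R^d$, which is the claim. I do not expect any genuine obstacle here: the substance has already been packaged into Proposition~\ref{prop:uniqueness_to_VAE_in_Lip} and Theorem~\ref{thm:big_one}, and the only new observation is the elementary fact that the stationary field $v$ solves \eqref{eq:vector_advection} exactly when $[b,v]=0$ a.e. As a sanity check, when $b$ is smooth one can see hypothesis~(3) by a direct computation: by Lemma~\ref{lemma:lie_bracket} (extended to $\Wrm^{1,1}$ by mollification) one has $\Lcal_b(T_{\rho_t v})=\bigl(-[b,\rho_t v]+(\dive b)\rho_t v\bigr)\L^d$, and using $[b,\rho_t v]=\rho_t[b,v]-(\nabla\rho_t\cdot b)v=-(\nabla\rho_t\cdot b)v$ together with the continuity equation $\partial_t\rho_t=-\dive(\rho_t b)$ one gets $\Lcal_b(T_{\rho_t v})=-(\partial_t\rho_t)v\,\L^d=-\tfrac{\dd}{\dd t}T_{\rho_t v}$; the Lipschitz case then follows from the same mollification scheme run in the proof of Proposition~\ref{prop:uniqueness_to_VAE_in_Lip}.
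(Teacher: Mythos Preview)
Your proposal is correct and follows essentially the same route as the paper: observe that the constant-in-time family $v_t\equiv v$ is a weak solution of \eqref{eq:vector_advection} because $[b,v]=0$ a.e., apply Proposition~\ref{prop:uniqueness_to_VAE_in_Lip} to conclude that $t\mapsto T_{\rho_t v}$ solves \eqref{eq:GTE}, and then invoke Theorem~\ref{thm:big_one}. Your write-up simply spells out in more detail why hypotheses~(1) and~(2) hold and includes a sanity check in the smooth case, but the logical skeleton is identical to the paper's three-line proof.
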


\begin{proof}
	The assumption $[b,v]=0$ a.e. on $\R^d$ implies that the constant-in-time family $v_t \equiv v$ solves \eqref{eq:vector_advection}. By Proposition \ref{prop:uniqueness_to_VAE_in_Lip} the family of normal currents $t \mapsto T_{w_t}$, with $w(t,x):=\rho(t,x)v(x)$, is thus transported by $b$. The conclusion now follows directly from Theorem \ref{thm:big_one}. 
\end{proof}

\begin{remark}
	In the incompressible case, i.e. when $\rho \equiv 1$, it is easily seen that the generalised Frobenius' Theorem above encompasses a large class of vector fields. Indeed, every bounded vector field (with bounded divergence) possessing a unique Regular Lagrangian Flow is admissible. In this case, the current $T_{w_t} = T_{v}$ is normal and constant in time and if $\Lcal_b T_v = 0$ then the flows of $b$ and $v$ commute. Moreover, the proof of Corollary \ref{cor:classical_frobenius} can also be specialised to the case in which both $b$ and $v$ are Lipschitz continuous, thus obtaining an independent proof of Frobenius' Theorem within the Cauchy-Lipschitz theory -- and without the introduction of set valued Lie brackets as in \cite{RampazzoSussmann}.
\end{remark}

\begin{remark} Another consequence of Frobenius' Theorem concerns the evolution of the boundary of the current $T_v$. From the equality $\rho_t v \L^d = (X_{t})_{*}(T_v)$ we immediately deduce, taking the boundary, that 
	\[ 
	\dive (\rho_t v ) = (X_{t})_{\#}(\dive v \L^d)
	\]
	as 0-currents in $\R^d$. In particular, in the incompressible case, the divergence of $v$ is \emph{invariant} under the flow.
\end{remark} 

\subsection{Extension to normal 1-currents}
For Lipschitz velocity fields, we can exploit the well-posedness of \eqref{eq:GTE} in the class of general normal 1-currents (see Theorem \ref{thm:main_JFA}) to derive an even more general Frobenius-type theorem. 
We begin with the following simple statement. 
\begin{proposition}\label{prop:invariance}
	Let $b \in \Lip(\R^d;\R^d)$ be a bounded vector field with $\dive b =0$ and let $X=X(t,x)$ be its flow. Let $\overline{T} \in \Nrm_1(\R^d)$ be such that $\Lcal_b \overline{T} = 0$. Then it holds $\overline{T} = (X_{t})_{*} \overline{T}$ for every $t \in (0,1)$.
\end{proposition}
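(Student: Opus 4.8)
The plan is to observe that the hypothesis $\Lcal_b \overline{T} = 0$ makes the \emph{constant-in-time} family $T_t \equiv \overline{T}$ a solution of the Geometric Transport Equation with initial datum $\overline{T}$, and then to invoke the uniqueness part of Theorem \ref{thm:main_JFA}. Concretely, I would first check the integrability requirement \eqref{eq:assumption_integrability_masses}: since $\overline{T}$ is normal, $\int_0^1 \Mbf(\overline{T}) + \Mbf(\partial \overline{T}) \, \dd t = \Mbf(\overline{T}) + \Mbf(\partial \overline{T}) < \infty$. Next I would verify the weak formulation \eqref{eq:PDE_weak_formulation}: for every $\psi \in \Crm^1_c((0,1))$ and every $\omega \in \Dscr^1(\R^d)$ one has $\int_0^1 \dprb{\overline{T},\omega}\,\psi'(t)\,\dd t - \int_0^1 \dprb{\Lcal_b \overline{T},\omega}\,\psi(t)\,\dd t = \dprb{\overline{T},\omega}\int_0^1 \psi'(t)\,\dd t = 0$, using $\Lcal_b \overline{T} = 0$ and $\psi(0) = \psi(1) = 0$. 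Weak-$*$ continuity of $t \mapsto \overline{T}$ is trivial, and $\overline{T}$ is attained as the weak-$*$ limit as $t \to 0$, so the constant family solves the initial-value problem \eqref{eq:initial_value_problem} with datum $\overline{T}$ in the class of normal $1$-currents.

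Then Theorem \ref{thm:main_JFA} applies directly: the initial-value problem \eqref{eq:initial_value_problem} admits a \emph{unique} solution in the class of normal currents, and this solution is given by the pushforward $t \mapsto (X_t)_*\overline{T}$ (which is in particular normal). Since the constant family $t \mapsto \overline{T}$ is also a normal-current solution with the same initial datum, the two must coincide, and hence $\overline{T} = (X_t)_*\overline{T}$ for every $t \in (0,1)$.

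I do not expect any real obstacle here: the proposition is essentially an immediate corollary of the well-posedness theory recalled in Section \ref{sc:prelim}. The only point deserving a moment's care is that the uniqueness asserted in Theorem \ref{thm:main_JFA} holds in the \emph{full} class of normal $1$-currents (not merely among pushforwards), which is exactly what is needed to conclude that the constant solution equals $(X_t)_*\overline{T}$. I note in passing that the assumption $\dive b = 0$ is not strictly required for this argument; it is the natural hypothesis under which the identity $\overline{T} = (X_t)_*\overline{T}$ reads as genuine \emph{invariance} of $\overline{T}$ under the measure-preserving flow $X_t$ (indeed $(X_t)_{\#}\L^d = \L^d$), and under which $\Lcal_b \overline{T} = 0$ has the clean interpretation discussed in the Introduction.
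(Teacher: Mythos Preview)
Your argument is correct and is exactly the approach the paper takes: it states that the proposition is an immediate consequence of the uniqueness statement and representation formula of Theorem~\ref{thm:main_JFA}. Your write-up simply makes explicit the verification that the constant family is an admissible normal-current solution, which the paper leaves implicit.
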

This is an immediate consequence of the uniqueness statement and representation formula of Theorem \ref{thm:main_JFA}.

We can use the following classical decomposition result to give a Lagrangian interpretation of the invariance relation $\overline{T} = (X_{t})_{*} \overline{T}$. Here, we denote $\Gamma:=\Lip ([0,1];\R^d)$ and we define $\bbb{\gamma}$ as the natural 1-current associated with a Lipschitz curve $\gamma \colon [0,1] \to \R^d$, i.e., the functional defined by 
\[
\dpr{\bbb{\gamma}, \omega} := \int_\gamma \omega = \int_0^1 \omega (\gamma (s)) \cdot \gamma'(s) \, \dd s, \qquad \forall \omega \in \Dscr^1(\R^d).
\] 
(Equivalently, $\bbb{\gamma}$ is the integral 1-current $[\gamma([0,1]), \tau, 1]$ where $\tau$ is a unit tangent vector to the image of $\gamma$.) 
%If $\gamma$ is injective, the mass measure $\| \bbb{\gamma} \|$ coincides with the measure $ \mathscr H^{1}\restrict \gamma([0,1]) = \gamma_{\#}(|\gamma'|\L^1 \restrict [0,1]) $ by the Area Formula, in particular the total mass $\|\bbb{\gamma}\|(\R^d)$ equals the length of the curve $\gamma$. 

\begin{proposition}[Decomposition of normal 1-currents in curves] \label{prop:thm_di_smirnov} Let $\overline{T} \in \Nrm_1(\R^d)$ be a normal 1-current. Then there is a finite positive measure $\eta$ on $\Gamma$ with
	\[
	\overline{T} = \int_{\Gamma} \bbb{\gamma} \, \dd \eta(\gamma)
	\]
as $1$-currents on $\R^d$ and 	
	\[
	\|\overline{T}\| = \int_{\Gamma} \|\bbb{\gamma}\| \, \dd\eta(\gamma)
	\]
as measures on $\R^d$. Moreover, the following properties hold: 
\begin{enumerate}
	\item For $\eta$-a.e. $\gamma$, it holds  
	\[
	\| \bbb{\gamma} \| = \mathscr H^{1}\restrict \gamma([0,1]) = \gamma_{\#}(|\gamma'|\L^1 \restrict [0,1]). 
	\]
	In particular the total mass $\|\bbb{\gamma}\|(\R^d)$ equals the \emph{length} of the curve $\gamma$; 
	\item If we write $\overline{T}=\tau \, \mu$ for a unit vector field $\tau$ defined $\mu$-almost everywhere, then for $\eta$-almost every $\gamma \in \Gamma$ it holds 
	\[
	\gamma'(s) = |\gamma'(s)|\tau(\gamma(s)) \, \qquad \text{for a.e. $s \in [0,1]$}.
	\]
\end{enumerate}
\end{proposition}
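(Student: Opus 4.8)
The plan is to derive the whole statement from Smirnov's decomposition theorem \cite{smirnov}, whose essential feature is the \emph{absence of cancellations} recorded by a mass identity; properties (1) and (2) then come out as bookkeeping on top of it. Concretely, I would quote Smirnov's theorem in the form: a normal $1$-current $\overline{T}$ can be written as $\overline{T}=\int_\Gamma\bbb{\gamma}\,\dd\eta(\gamma)$ for a finite positive measure $\eta$, together with the no-cancellation identity $\|\overline{T}\|=\int_\Gamma\|\bbb{\gamma}\|\,\dd\eta$ as measures on $\R^d$. After rescaling domains, the curves may be taken in $\Gamma=\Lip([0,1];\R^d)$; after the standard refinement that cuts a curve at its self-intersections — an operation that does not increase masses and therefore preserves the no-cancellation identity — one may further assume that $\eta$-a.e.\ $\gamma$ is injective, and discard constant curves. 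This is the only nontrivial input; everything below uses only these two identities and injectivity of $\eta$-a.e.\ curve.

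\emph{Property (1).} Fix an injective $\gamma\in\Gamma$. Since the image of $\{\gamma'=0\}$ is $\H^1$-negligible, the area formula gives, for every bounded Borel $1$-form $\omega$,
\[
\langle\bbb{\gamma},\omega\rangle=\int_0^1\omega(\gamma(s))\cdot\gamma'(s)\,\dd s=\int_{\gamma([0,1])}\omega(x)\cdot\tau_\gamma(x)\,\dd\H^1(x),
\]
where $\tau_\gamma(x):=\gamma'(s)/|\gamma'(s)|$ at the ($\H^1$-a.e.\ unique) preimage $s$ of $x$, a unit vector. Hence $\bbb{\gamma}=\tau_\gamma\,\H^1\restrict\gamma([0,1])$, so $\|\bbb{\gamma}\|=\H^1\restrict\gamma([0,1])$; the same area formula yields $\gamma_{\#}(|\gamma'|\L^1\restrict[0,1])=\H^1\restrict\gamma([0,1])$ as well. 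Taking total masses, $\|\bbb{\gamma}\|(\R^d)=\H^1(\gamma([0,1]))=\int_0^1|\gamma'|\,\dd s$, the length of $\gamma$.

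\emph{Property (2).} Write $\overline{T}=\tau\,\mu$ with $|\tau|=1$ $\mu$-a.e.\ (so $\mu=\|\overline{T}\|$), and $\bbb{\gamma}=\tau_\gamma\,\|\bbb{\gamma}\|$, $|\tau_\gamma|=1$, as above. Pairing the decomposition of $\overline{T}$ with the bounded Borel $1$-form associated with $\tau$ — legitimate by a standard dominated-convergence/density argument, using $\|\overline{T}\|=\int_\Gamma\|\bbb{\gamma}\|\,\dd\eta$ — gives
\[
\int_\Gamma\int_{\R^d}\tau\cdot\tau_\gamma\,\dd\|\bbb{\gamma}\|\,\dd\eta=\int_{\R^d}|\tau|^2\,\dd\mu=\Mbf(\overline{T}),
\]
whereas taking total masses in the no-cancellation identity gives $\int_\Gamma\int_{\R^d}1\,\dd\|\bbb{\gamma}\|\,\dd\eta=\int_\Gamma\Mbf(\bbb{\gamma})\,\dd\eta=\Mbf(\overline{T})$. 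Subtracting, $\int_\Gamma\int_{\R^d}\bigl(1-\tau\cdot\tau_\gamma\bigr)\,\dd\|\bbb{\gamma}\|\,\dd\eta=0$; the integrand is $\ge0$ (inner product of unit vectors), hence vanishes a.e., so for $\eta$-a.e.\ $\gamma$ one has $\tau_\gamma=\tau$ at $\|\bbb{\gamma}\|$-a.e.\ point. Finally, by (1) $\|\bbb{\gamma}\|=\gamma_{\#}(|\gamma'|\L^1\restrict[0,1])$, so its null sets pull back to $E\subseteq[0,1]$ with $\int_E|\gamma'|=0$; since $\tau_\gamma(\gamma(s))=\gamma'(s)/|\gamma'(s)|$ for a.e.\ $s$ with $\gamma'(s)\neq0$, this yields $\gamma'(s)=|\gamma'(s)|\,\tau(\gamma(s))$ for a.e.\ $s\in[0,1]$, which is (2).

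\emph{Main obstacle.} The genuinely hard part is the first step: the existence of a no-cancellation decomposition into Lipschitz curves on $[0,1]$ that may be taken injective is precisely Smirnov's theorem together with its refinements, and I would invoke it as a black box rather than reprove it. The remaining points are routine but demand care: the measurability of $\gamma\mapsto\bbb{\gamma}$ and $\gamma\mapsto\|\bbb{\gamma}\|$, needed to run Fubini on $\Gamma\times\R^d$; the approximation justifying the pairing of the decomposition with the non-smooth field $\tau$; and the area-formula bookkeeping that converts $\|\bbb{\gamma}\|$-a.e.\ statements on $\R^d$ into a.e.-$s$ statements on $[0,1]$.
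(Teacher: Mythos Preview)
Your proposal is correct and aligns with the paper's treatment: the paper does not give its own proof of this proposition but simply cites Smirnov \cite{smirnov} for the decomposition with the no-cancellation mass identity, and \cite[Lemma~9.1]{BG} for property~(2). Your argument for (2) --- pairing with the unit orienting field $\tau$ and using the mass identity to force $1-\tau\cdot\tau_\gamma=0$ a.e.\ --- is exactly the computation carried out in that reference, and your derivation of (1) from injectivity via the area formula is the standard one.
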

See, for instance, \cite[Theorem C]{smirnov} and \cite[Lemma 9.1]{BG} for a direct proof of the last assertion. In what follows, we will refer to $\eta$ as a \emph{measure representation} of the current $\overline{T}$.
\begin{definition} Let $b \in \Lip(\R^d;\R^d)$ be a bounded vector field and let $X=X(t,x)$ be its flow. We call \emph{lifted flow of $b$ at time $t\in [0,1]$} the map $\mathtt{X}_t \colon \Gamma \to \Gamma$ defined by
	\[
	\mathtt{X}_t(\gamma)(s):= X_t(\gamma(s)).
	\]
\end{definition}
Notice that the definition is well-posed, namely for every $\gamma \in \Gamma$ the curve $\mathtt X_t(\gamma) \in \Gamma$, since for every $s,r \in [0,1]$ we have 
\[
|\mathtt X_t(\gamma)(s)-\mathtt X_t(\gamma)(r) | = |X_t(\gamma(s))-X_t(\gamma(r))| \le \Lip (X_t) |\gamma(s)-\gamma(r) | \le \Lip(X_t) \Lip(\gamma)|s-r|. 
\]
The map $\mathtt X_t$ is also seen to be Borel, since it is Lipschitz continuous w.r.t. the sup-norm on $\Gamma$, as 
\[
\|\mathtt X_t(\gamma_1)-\mathtt X_t(\gamma_2) \|_\infty \le \Lip (X_t)  \|\gamma_1 - \gamma_2 \|_{\infty}.
\]
In particular, given a (finite) measure $\eta \in \Mcal(\Gamma)$ we can consider its pushforward 
\[
(\mathtt X_t)_{\#}\eta =: \eta_t  
\]
which is still a (finite) measure on $\Gamma$.

We can now prove the following proposition, which shows that $\eta_t$ is always a measure representation of $T_t$. %In particular, within the setting of Proposition \ref{prop:invariance}, we have that $\eta_t$ are measure representations of $\overline{T}$.

\begin{proposition}[Frobenius' Theorem for normal 1-currents]\label{prop:smirnov} Let $b \in \Lip(\R^d;\R^d)$ be a bounded vector field and let $X=X(t,x)$ be its flow. Let $\overline{T} \in \Nrm_1(\R^d)$ be a normal 1-current and let $\eta \in \Mcal (\Gamma)$ be a measure representation of $\overline{T}$. 
	Let $T_t$ be the unique solution to 
	\[
	\left\{\begin{aligned}
		\frac{\dd}{\dd t}T_t+\Lcal_{b} T_t &=0,  \qquad t \in (0,1),\\
		T_0 &=\overline{T}.
	\end{aligned}\right.
	\]
	Then $\eta_t$ is a measure representation of $T_t$ for a.e. $t \in [0,1]$. 
\end{proposition}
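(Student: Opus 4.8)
The plan is to combine the representation formula $T_t=(X_t)_*\overline T$ of Theorem~\ref{thm:main_JFA} with the decomposition $\overline T=\int_\Gamma\bbb\gamma\,\dd\eta(\gamma)$ and to commute the pushforward past the superposition integral. Fix $t\in(0,1)$, so that $T_t=(X_t)_*\overline T$. Let $b^\eps:=b*\sigma^\eps$ be spatial mollifications of $b$, with smooth flows $X^\eps_t$, so that $X^\eps_t\to X_t$ uniformly on compact sets and $\sup_\eps\Lip(X^\eps_t)<\infty$. For a smooth proper diffeomorphism and $\omega\in\Dscr^1(\R^d)$ one has, by the ordinary chain rule, the elementary identity $\dpr{(X^\eps_t)_*\bbb\gamma,\omega}=\dpr{\bbb\gamma,(X^\eps_t)^*\omega}=\dpr{\bbb{X^\eps_t\circ\gamma},\omega}$ for every $\gamma\in\Gamma$. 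Since $(X^\eps_t)^*\omega\in\Dscr^1(\R^d)$, testing $\overline T=\int_\Gamma\bbb\gamma\,\dd\eta$ against this form and using the identity gives $\dpr{(X^\eps_t)_*\overline T,\omega}=\int_\Gamma\dpr{\bbb{X^\eps_t\circ\gamma},\omega}\,\dd\eta(\gamma)$. Now I would let $\eps\to0$: the left-hand side converges to $\dpr{(X_t)_*\overline T,\omega}=\dpr{T_t,\omega}$ by the weak$^*$-continuity of the pushforward under uniform convergence of the maps with uniform Lipschitz bounds (the same fact already used in the proof of Proposition~\ref{prop:uniqueness_to_VAE_in_Lip}); the integrand on the right converges, for each fixed $\gamma$, to $\dpr{\bbb{X_t\circ\gamma},\omega}$ (because $X^\eps_t\circ\gamma\to X_t\circ\gamma$ uniformly while $(X^\eps_t\circ\gamma)'\weaksto(X_t\circ\gamma)'$ in $\Lrm^\infty([0,1])$), and is dominated by $C\|\omega\|_\infty\,\mathrm{length}(\gamma)$, which lies in $\Lrm^1(\eta)$ since $\int_\Gamma\mathrm{length}(\gamma)\,\dd\eta(\gamma)=\Mbf(\overline T)<\infty$. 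As $X_t\circ\gamma=\mathtt X_t(\gamma)$ and $\eta_t=(\mathtt X_t)_\#\eta$, a change of variables in the limiting integral yields $T_t=\int_\Gamma\bbb\gamma\,\dd\eta_t(\gamma)$ as $1$-currents, which is the first half of the claim.

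For the mass identity, set $\nu:=\int_\Gamma\|\bbb\gamma\|\,\dd\eta_t(\gamma)$; since $\|T_t\|\le\nu$ holds by subadditivity of the mass measure under superposition, it suffices to prove $\Mbf(T_t)\ge\nu(\R^d)$. The key input here is the explicit pushforward formula of Lemma~\ref{lemma:pushforward_currents}: writing $\overline T=\tau\|\overline T\|$ with $\tau=\vec{\overline T}$ and recalling that $X_t$ is proper, injective and bi-Lipschitz, the lemma gives $T_t=(X_t)_*\overline T=\tilde\tau\,\tilde\mu$ with $\tilde\mu=(X_t)_\#\|\overline T\|$ and $\tilde\tau(z)=DX_t(w)[\tau(w)]$ for $z=X_t(w)$, where $DX_t(w)[\,\cdot\,]$ is the differential of $X_t$ restricted to $\spn(\vec{\overline T})(w)\subseteq V(\|\overline T\|,w)$, which exists $\|\overline T\|$-a.e.\ by~\eqref{eq:span_in_bundle}. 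Then, by the change-of-variables formula for $\tilde\mu$, by $\|\overline T\|=\int_\Gamma\|\bbb\gamma\|\,\dd\eta$, and by $\|\bbb\gamma\|=\gamma_\#(|\gamma'|\L^1\restrict[0,1])$ (property~(1) of Proposition~\ref{prop:thm_di_smirnov}),
\[
\Mbf(T_t)=\int_{\R^d}\|\tilde\tau\|\,\dd\tilde\mu=\int_\Gamma\int_0^1\big\|DX_t(\gamma(r))[\tau(\gamma(r))]\big\|\,|\gamma'(r)|\,\dd r\,\dd\eta(\gamma).
\]
On the other hand, for $\eta$-a.e.\ $\gamma$ one has $\gamma'(r)=|\gamma'(r)|\tau(\gamma(r))$ for a.e.\ $r$ (property~(2)) and $X_t$ is differentiable along $\tau(\gamma(r))$ at $\gamma(r)$ for a.e.\ $r$ (a $\|\overline T\|$-null set is $\|\bbb\gamma\|$-null for $\eta$-a.e.\ $\gamma$, hence met by $\gamma$ only on a $|\gamma'|\L^1$-null set of parameters); the chain rule for a Lipschitz map composed with a Lipschitz curve then gives $|(X_t\circ\gamma)'(r)|=|\gamma'(r)|\,\|DX_t(\gamma(r))[\tau(\gamma(r))]\|$ for a.e.\ $r$. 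Hence, using $\eta_t=(\mathtt X_t)_\#\eta$ and $\Mbf(\bbb\xi)\le\mathrm{length}(\xi)$ for any Lipschitz curve $\xi$,
\[
\nu(\R^d)=\int_\Gamma\Mbf\big(\bbb{X_t\circ\gamma}\big)\,\dd\eta(\gamma)\le\int_\Gamma\int_0^1|(X_t\circ\gamma)'(r)|\,\dd r\,\dd\eta(\gamma)=\Mbf(T_t),
\]
the last equality being the previous display. Combined with $\Mbf(T_t)=\|T_t\|(\R^d)\le\nu(\R^d)$, this forces $\Mbf(T_t)=\nu(\R^d)$, and since $\|T_t\|\le\nu$ as measures we conclude $\|T_t\|=\nu=\int_\Gamma\|\bbb\gamma\|\,\dd\eta_t(\gamma)$, i.e.\ $\eta_t$ is a measure representation of $T_t$. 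The remaining properties~(1)--(2) of Proposition~\ref{prop:thm_di_smirnov} transfer to $\eta_t$ as well, since $X_t$ is injective (preserving essential injectivity of the curves) and, by the computation above together with $\tilde\tau\ne0$ $\tilde\mu$-a.e.\ (which holds because $X_{-t}$ is Lipschitz and $X_{-t}\circ X_t=\Id$), the tangent of $X_t\circ\gamma$ at $X_t(\gamma(r))$ is aligned with $\vec{T_t}=\tilde\tau/\|\tilde\tau\|$.

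The step I expect to be the real obstacle is the mass identity, i.e.\ ruling out cancellations in the superposition $\int_\Gamma\bbb\gamma\,\dd\eta_t(\gamma)$. Whereas the current-level identity only uses linearity and weak$^*$-continuity of the pushforward, the mass identity genuinely relies on the explicit pushforward formula of Lemma~\ref{lemma:pushforward_currents} — hence on the decomposability-bundle inclusion \eqref{eq:span_in_bundle} for the Smirnov curves, which is exactly what makes the a priori merely Lipschitz map $X_t$ differentiable along the tangent directions of $\overline T$ — together with some careful bookkeeping of null sets (the transfer of $\|\overline T\|$-null sets to $\|\bbb\gamma\|$-null sets along $\eta$-a.e.\ curve). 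The other ingredients — the mollification argument, the two chain rules, and the changes of variables — are entirely routine.
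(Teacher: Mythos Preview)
Your proof is correct and, for the substantive part (the mass identity), essentially coincides with the paper's argument: both rely on Lemma~\ref{lemma:pushforward_currents} to write $\Mbf(T_t)=\int|D_TX_t\cdot\tau|\,\dd\|\overline T\|$, then disintegrate $\|\overline T\|$ along the Smirnov curves and identify the integrand with $|(X_t\circ\gamma)'|$ via the directional chain rule afforded by the decomposability bundle. The paper presents this last step as a direct equality (carrying out the difference-quotient computation explicitly, in the style of Step~2 of Theorem~\ref{thm:big_one}), whereas you package it as a sandwich $\|T_t\|\le\nu$ and $\nu(\R^d)\le\Mbf(T_t)$; the content is the same.

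The one genuine methodological difference is in the current-level identity $T_t=\int_\Gamma\bbb\gamma\,\dd\eta_t$. You obtain it by mollifying $b$, using the smooth chain rule $(X^\eps_t)_*\bbb\gamma=\bbb{X^\eps_t\circ\gamma}$, integrating in $\eta$, and passing to the limit via dominated convergence. The paper instead argues intrinsically: for each $\gamma$ the path $t\mapsto(X_t)_*\bbb\gamma$ solves \eqref{eq:GTE} with datum $\bbb\gamma$ (Theorem~\ref{thm:main_JFA}), hence by linearity the superposition $\int_\Gamma(X_t)_*\bbb\gamma\,\dd\eta$ solves \eqref{eq:GTE} with datum $\overline T$, and uniqueness forces it to equal $T_t$; the identity $(X_t)_*\bbb\gamma=\bbb{\mathtt X_t(\gamma)}$ is then used as a classical fact for Lipschitz pushforwards of integral $1$-currents. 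Your route works but is heavier machinery for this step; the paper's is shorter and avoids the approximation. Your closing remarks on the transfer of properties~(1)--(2) go beyond what the paper proves (it establishes only the two defining identities), but they are correct and harmless.
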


\begin{proof}
	Let us consider $\overline{T}$ as in the statement and let $\eta$ be a measure representation of $\overline{T}$ in the sense of Proposition \ref{prop:thm_di_smirnov}. Let us now consider, for $\eta$-a.e. $\gamma$, the following auxiliary initial value problems: 
	\[
	\left\{\begin{aligned}
		\frac{\dd}{\dd t}P_t+\Lcal_{b} P_t &=0,  \qquad t \in (0,1),\\
		P_0 &=\bbb{\gamma}
	\end{aligned}\right.
	\]
	By Theorem \ref{thm:main_JFA}, we infer that $P_t =(X_t)_{*}\bbb{\gamma}$.
	By the linearity of the equation, the current
	\[
	R_t:= \int_{\Gamma} P_t \, \dd \eta(\gamma) 
	\]
	is still a solution, with initial datum $R_0 = \int_\Gamma P_0 \, \dd \eta(\gamma) = \int_\Gamma \bbb{\gamma} \, \dd \eta(\gamma) = \overline{T}$. By uniqueness, we must have $R_t=T_t$, i.e. 
	\[
	T_t = \int_{\Gamma} P_t \, \dd \eta(\gamma) = \int_{\Gamma} (X_t)_{*}\bbb{\gamma} \, \dd \eta(\gamma) = \int_{\Gamma} \bbb{\mathtt{X}_t(\gamma)} \, \dd \eta(\gamma) = \int_\Gamma \bbb{\gamma}\, \dd \eta_t(\gamma).
	\]
	Concerning the equality of masses, we want to prove that 
	\[
	\| T_t \| = \int_{\Gamma} \| \bbb{\sigma} \| \, \dd\eta_t(\sigma). 
	\] 
	It is clearly enough to show that the total masses of these two measures agree, i.e. 
	\[
	\| T_t \|(\R^d) = \int_{\Gamma} \|\bbb{\sigma} \| (\R^d) \, \dd\eta_t(\sigma).  
	\]
	Observe that, by Lemma \ref{lemma:pushforward_currents}, we have 
	\[
	\Mbf(T_t) = \Mbf((X_t)_{*} T) = \int_{\R^d}|(D_TX_t \cdot \tau) (X_{-t}(y))| \, \dd( (X_t)_{\#}\mu)(y) =  \int_{\R^d}|(D_TX_t \cdot \tau) (z)| \, \dd \mu(z).  
	\]
	From Proposition \ref{prop:thm_di_smirnov}, we have the equality 
	\[
	\mu = \|\overline{T}\| =  \int_{\Gamma} \| \bbb{\gamma} \|  \, \dd \eta(\gamma) \qquad \text{ as measures on $\R^d$,}
	\]
	and thus 
	\begin{align*}
		\Mbf(T_t) & =   \int_{\R^d}|(D_TX_t \cdot \tau) (z)| \, \dd \mu(z) \\ 
		& =  \int_{\Gamma} \dpr{ \| \bbb{\gamma} \| ,   | D_TX_t \cdot \tau (\frarg) \vert}  \, \dd  \eta(\gamma) \\ 
		& = \int_{\Gamma} \int_0^1 |(D_T X_t \cdot \tau) (\gamma(s)) | \, |\gamma'(s)| \, \dd s \, \dd  \eta(\gamma) \\ 
		& = \int_{\Gamma} \int_0^1 \bigg\vert  D_TX_t(\gamma(s)) \cdot |\gamma'(s)|  \tau(\gamma(s)) \bigg\vert \, \dd s \, \dd  \eta(\gamma) \\ 
		& = \int_{\Gamma} \int_0^1 \bigg\vert  \frac{\dd}{\dd s} X_t(\gamma(s))  \bigg\vert \, \dd s \, \dd  \eta(\gamma) \\ 
		& = \int_{\Gamma} \| \mathtt{X}_t (\gamma) \|(\R^d)  \, \dd  \eta(\gamma) \\ 
		& = \int_{\Gamma} \| \sigma \|(\R^d) \, \dd  \eta_t (\sigma)
	\end{align*}
	and this concludes the proof. 
	The passage from the third to the fourth line can be justified by means of the following differentiability result: For $\eta$-a.e. $\gamma \in \Gamma$ the map $s \mapsto X_t(\gamma(s))$ is differentiable at $\L^1$-a.e. $s \in [0,1]$ and it holds 
	\[
	\frac{\dd}{\dd s} X_t(\gamma(s)) =  D_T X_t(\gamma(s)) \cdot |\gamma'(s)| \tau(\gamma(s)). 
	\]
	By definition 
	\begin{align*}
		\frac{\dd}{\dd s} (X_t(\gamma(s))) & = \lim_h \frac{X_t\left(\gamma(s+h))-X_t(\gamma(s)\right)}{h} \\ 
		& = \lim_h \frac{X_t\left(\gamma(s) + \int_s^{s+h} |\gamma'(r)|\tau(\gamma(r)) \dd r\right)- X_t(\gamma(s)+h |\gamma'(s)|\tau(\gamma(s)))}{h}  \\
		& \qquad + \lim_h \frac{ X_t(\gamma(s) + h|\gamma'(s)|\tau(\gamma(s)))- X_t(\gamma(s))}{h} \\ 
		& =: \ell + D_TX_t(\gamma(s)) \cdot |\gamma'(s)|\tau(\gamma(s)), 
	\end{align*}
	provided that $\ell$ exists. Here we have used that the vector $h|\gamma'(s)|\tau(\gamma(s)) \in \spn(\tau(\gamma(s)))$ belongs to the decomposability bundle $V(\mu,\gamma(s))$ for $\eta$-a.e. $\gamma$ and $\L^1$-a.e. $s$. Let us show that indeed $\ell=0$. We have 
	\begin{align*}
		& \frac{1}{h} \left|X_t\left(\gamma(s) + \int_s^{s+h} |\gamma'(r)|\tau(\gamma(r))\,  \dd r\right)- X_t(\gamma(s)+h |\gamma'(s)|\tau(\gamma(s))) \right|   \\ 
		\le & \Lip(X_t) \left| \fint_s^{s+h} |\gamma'(r)|\tau(\gamma(r))\, \dd  r -  |\gamma'(s)|\tau(\gamma(s)) \right| 
	\end{align*}
	and the right-hand side goes to $0$ as $h \to 0$ at a.e. $s$ by Lebesgue's Theorem. 
\end{proof}

Combining Proposition \ref{prop:invariance} with Proposition \ref{prop:smirnov} we have that, if $\dive b=0$ and $\Lcal_b \overline{T}=0$, then $\eta_t$ are all measure representations of $\overline{T}$. In other words, pushing forward a representation of the initial current along the flow, we obtain another representation of the same current. Loosely speaking, images under the flow of $b$ of curves of $\overline{T}$ are again curves of $\overline{T}$. In the next section, we discuss a somewhat related physical illustration of this property.

\section{Alfvén Theorem as a time-dependent Frobenius' Theorem}
Consider a time-dependent vector field $B = B(t,x)$ defined on $(0,1)\times \R^3$ modelling a magnetic field in a perfectly conducting fluid moved by a (steady) velocity field $V \colon \R^3 \to \R^3$. By the Maxwell equations from Electromagnetism, the vector field $B$ is divergence-free, i.e. for every $t$, it holds $\dive B_t =0$, and according to standard modelling assumptions, $t \mapsto B_t$ solves the so-called \emph{induction equation} of Magnetohydrodynamics
\[
\frac{\dd}{\dd t} B_t =\curl (V \times B_t), \qquad t \in (0,1).  
\]
As noted in \cite{BDNR} -- modulo a sign typo -- this is precisely the coordinate expression of \eqref{eq:GTE_intro} driven by the velocity field $V$ for the three-dimensional absolutely continuous, boundaryless 1-currents $T_t := B_t \L^3$.  We have the following: 

\begin{theorem}[Alfvén Theorem]\label{thm:alfven}
	Let	$V \in \Lip(\R^3;\R^3)$ be a bounded, incompressible vector field, i.e. $\dive V=0$. Let $B \in \Crm^0([0,1]; \Lrm^{\infty}(\R^3))$ be a family of divergence-free vector fields such that the currents $T_t := B_t \L^3$ solve 
	\[
	\frac{\dd}{\dd t}T_t+\Lcal_V T_t =0,  \qquad t \in (0,1). 
	\]
	Suppose that, for every $t \in [0,1]$, the vector field $B_t$ has a unique Regular Lagrangian Flow, denoted by $Y^{(t)} := Y^{(t)}(s,y)$. Then
	\[
	Y^{(t)} = X_t \circ Y^{(0)} \qquad \text{$\L^{1+3}$-almost everywhere on $(0,1) \times \R^3$}
	\]
	for every $t \in [0,1]$.
\end{theorem}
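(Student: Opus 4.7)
The plan is to mimic the proof of Theorem \ref{thm:big_one}, exploiting two simplifications specific to the Alfvén setting: the incompressibility $\dive V=0$ forces the density $\rho$ of $V$ to be identically $1$ (so that $X_t$ is volume-preserving), and for each fixed $t$ the relevant transported field $B_t$ is autonomous in the RLF variable $s$. Fix $t \in [0,1]$; the goal is to prove that the map
\[
Z^{(t)}(s,y) := X_t\bigl(Y^{(0)}(s,y)\bigr)
\]
is a Regular Lagrangian Flow of $B_t$. The assumed uniqueness of the RLF of $B_t$ then immediately yields $Y^{(t)} = Z^{(t)} = X_t \circ Y^{(0)}$ $\L^{1+3}$-almost everywhere, which is the claimed identity.

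The first step is to translate the GTE hypothesis into a pointwise identity. Since $T_t = B_t \L^3$ solves \eqref{eq:GTE} with initial datum $B_0\L^3$, Theorem \ref{thm:main_JFA} gives $T_t = (X_t)_* (B_0\L^3)$; combining Lemma \ref{lemma:pushforward_currents} with the volume-preservation $(X_t)_\#\L^3 = \L^3$ produces
\begin{equation}\label{eq:alfven_key}
\nabla X_t(y)\cdot B_0(y) = B_t(X_t(y)) \qquad \text{for $\L^3$-a.e.\ } y \in \R^3,
\end{equation}
the direct analogue of \eqref{eq:stellina} with $\rho\equiv 1$.

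The second step is to verify the two defining properties of an RLF for $Z^{(t)}$. Non-concentration is immediate from incompressibility, since
\[
Z^{(t)}(s,\cdot)_\#\L^3 = (X_t)_\#\bigl(Y^{(0)}(s,\cdot)_\#\L^3\bigr) \le C\,(X_t)_\#\L^3 = C\L^3.
\]
For the ODE, Lemma \ref{lemma:AC_curves_composition} ensures that $s\mapsto X_t(Y^{(0)}(s,y))$ is absolutely continuous for a.e.\ $y$; then the same difference-quotient calculation as in Step 2 of Theorem \ref{thm:big_one} (splitting and estimating $\frac{1}{h}|X_t(Y^{(0)}(s,y)+\int_s^{s+h}B_0(Y^{(0)}(r,y))\,\dd r)-X_t(Y^{(0)}(s,y)+hB_0(Y^{(0)}(s,y)))|$ via $\Lip(X_t)$ and the Lebesgue differentiation theorem) yields
\[
\frac{\dd}{\dd s}Z^{(t)}(s,y) = (\nabla X_t \cdot B_0)\bigl(Y^{(0)}(s,y)\bigr),
\]
and applying \eqref{eq:alfven_key} at the point $Y^{(0)}(s,y)$ rewrites the right-hand side as $B_t(Z^{(t)}(s,y))$, as required.

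The main technical obstacle is precisely the one encountered in Theorem \ref{thm:big_one}: identity \eqref{eq:alfven_key} only holds outside a Lebesgue-null exceptional set $N_t \subset \R^3$, and one must ensure $Y^{(0)}(s,y)\notin N_t$ for $\L^{1+3}$-a.e.\ $(s,y)$. This is the standard Fubini-type argument based on the RLF non-concentration bound $Y^{(0)}(s,\cdot)_\#\L^3 \le C\L^3$, which is identical to the one carried out in Step 2 of Theorem \ref{thm:big_one} and which closes the proof. Note that the $t$-parameter plays no role in the argument: $t$ is fixed at the outset, the conclusion is obtained a.e.\ in $(s,y)$, and the statement of the theorem follows by repeating the reasoning for every $t \in [0,1]$.
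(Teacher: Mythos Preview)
Your proposal is correct and follows essentially the same route as the paper: define $Z^{(t)}:=X_t\circ Y^{(0)}$, extract from Theorem~\ref{thm:main_JFA} and Lemma~\ref{lemma:pushforward_currents} the pointwise identity $(\nabla X_t\cdot B_0)(y)=B_t(X_t(y))$ (equivalently, the paper's \eqref{eq:stellina_bis}), verify the ODE via the difference-quotient splitting and Lebesgue differentiation exactly as in Step~2 of Theorem~\ref{thm:big_one}, handle the exceptional set by the same Fubini argument, and check non-concentration using $(X_t)_\#\L^3=\L^3$. The only cosmetic difference is that you write the key identity in the variable $y$ rather than $X_{-t}(x)$, and you spell out the non-concentration estimate that the paper calls ``immediate''.
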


\begin{proof}
	It is enough to show that, for fixed $t \in [0,1]$ the map $Z^{(t)} := X_t \circ Y^{(0)}$ defines a Regular Lagrangian Flow of $B_t$.
	
	Since $V$ is Lipschitz and incompressible, by Theorem \ref{thm:main_JFA} we deduce immediately the identity 
	\begin{equation}\label{eq:stellina_bis}
		(\nabla X_t \cdot B_0)(X_{-t}(\frarg))  =  B_t(\frarg) \qquad \text{ a.e. on $\R^d$.} 
	\end{equation}
	We now show that \eqref{eq:stellina_bis} implies that, for every $t \in [0,1]$ and for a.e. $y \in \R^d$, the function 
	\[
	s \mapsto X_{t}(Y^{(0)}_s(y))
	\]
	is differentiable for $\L^1$-a.e. $s$ and it holds 
	\begin{equation}\label{eq:derivata_di_composizione_cruciale_dinuovo}
		\frac{\dd}{\dd s} X_{t}(Y^{(0)}_s(y)) = B_t(X_{t}(Y^{(0)}_s(y))). 
	\end{equation}
	Indeed, we have by definition 
	\begin{align*}
		\frac{\dd}{\dd s} (X_t(Y^{(0)}_s(y))) & = \lim_h \frac{X_t(Y^{(0)}_{s+h}(y)) -X_t(Y^{(0)}_s(y))}{h} \\ 
		& = \lim_h \frac{X_t\left(Y^{(0)}_{s}(y) + \int_s^{s+h} B_0(Y^{(0)}_r(y)) \dd r\right)- X_t(Y^{(0)}_s(y)+hB_0(Y^{(0)}_s(y)))
		}{h}  \\
		& \qquad + \lim_h \frac{ X_t(Y^{(0)}_s(y)+hB_0(Y^{(0)}_s(y)))- X_t(Y^{(0)}_s(y))}{h} \\ 
		& =: \ell + (\nabla X_t \cdot B_0)(Y^{(0)}_s(y)), 
	\end{align*}
	provided that $\ell$ exists. Let us show that $\ell=0$. We have 
	\begin{align*}
		& \frac{1}{h}\left|X_t\left(Y^{(0)}_{s}(y) + \int_s^{s+h} B_0(Y^{(0)}_r(y)) \dd r \right) - X_t(Y^{(0)}_s(y)+hB_0(Y^{(0)}_s(y)))\right| \\ 
		\le & \Lip(X_t) \left| \fint_s^{s+h} B_0(Y^{(0)}_r(y))\, \dd  r - B_0(Y^{(0)}_s(y))\right| 
	\end{align*}
	and the right-hand side goes to $0$ as $h \to 0$ at every Lebesgue point of $r \mapsto B_0(Y^{(0)}_r(y))$, and thus for a.e. $s$. 
	We have therefore obtained 
	\[
	\frac{\dd}{\dd s} (X_t(Y^{(0)}_s(y)) = (\nabla X_t \cdot B_0)(Y^{(0)}_s(y)) = B_t(X_t(Y^{(0)}_s(y)))
	\]
	for a.e. $y \in \R^d$ and for a.e. $s \in [0,1]$, where we have used again a Fubini-like argument like the one in the proof of Theorem \ref{thm:big_one}.
	The fact that the maps $X_t(Y^{(0)}(s,\frarg))$ are measure preserving on $\R^d$ is immediate and therefore the conclusion follows.
\end{proof}

In other words, chosen a (magnetic) field line at time $0$, its image under $X_t$ is a (magnetic) field line at time $t$. This could be interpreted by saying that the magnetic field lines evolve under the fluid flow, i.e., they are
‘frozen in the flow.’ This is in accordance with the so-called Alfvén Theorem from MHD, which asserts the following:
\vspace{1em}
\begin{quote}
	Suppose that we have a homogeneous magnetic field in a perfectly conducting liquid. The magnetic lines of force can be considered as elastic strings according to the usual mechanical picture of electrodynamical phenomena. In view of the infinite conductivity, every motion (perpendicular to the field) of the liquid in relation to the lines of force is forbidden because it would give infinite eddy currents. Thus the matter of the liquid is \emph{fastened} to the lines of force, constituting a series of strings.

	\attrib{H. Alfvén,
		{\bfseries Nature} 150, 405–406 (1942),  \cite{alfven}}
\end{quote}
\vspace{1em}
Notice, however, that this is no longer true as such in the compressible case. From the induction equation one deduces this time that for every $t \in [0,1]$ it holds 
\[
B_t =( \nabla X_t \cdot B_0 )(X_{-t}) \rho_t 
\]
almost everywhere on $\R^d$. Assuming that $V$ is smooth, one can divide by the density $\rho_t$ and conclude, in this case, that the frozen-in lines are the integral curves of the rescaled vector field 
\[
\tilde{B}_t := \frac{1}{\rho_t} B_t.
\] 
A similar conclusion can be reached in the Lipschitz setting, provided the rescaled vector field $\tilde{B}_t$ has a unique Regular Lagrangian Flow. In other words, only the \emph{direction} of the field lines is frozen, but not their intensity.

\bibliographystyle{plain}

\end{document}